\numberwithin{equation}{section}
\theoremstyle{plain}
\newtheorem{theorem}{Theorem}
\newtheorem{definition}[theorem]{Definition}
\newtheorem{lemma}[theorem]{Lemma}
\newtheorem{idealization}[theorem]{Idealization}
\newtheorem{algorithm}[theorem]{Algorithm}
\newtheorem{assumption}[theorem]{Assumption}
\begin{document}

\title[ParRep for Markov chains]{The parallel replica method for computing equilibrium averages of Markov chains}

\author{DAVID ARISTOFF}

\address
{Department of Mathematics\newline
\indent Colorado State University}
\email{
aristoff@math.colostate.edu
}

\date{31 December 2014.}

\subjclass[2000]{65C05, 65C20, 65C40, 60J22, 65Y05} 
\keywords{Monte Carlo methods, Markov chain Monte Carlo, metastability, parallel computing.}

\begin{abstract} An algorithm is proposed for computing  
equilibrium averages of Markov chains which suffer from metastability -- 
the tendency to remain in one or more subsets of state space for 
long time intervals. The algorithm, called the parallel replica method (or 
ParRep), uses many parallel 
processors to explore these subsets more efficiently. 
Numerical simulations on a simple model demonstrate consistency of 
the method. A proof of consistency is given in an idealized setting. 
The parallel replica method can be considered a generalization of A.F. Voter's 
parallel replica dynamics, originally developed to efficiently 
simulate metastable Langevin stochastic dynamics. 
\end{abstract}

\maketitle

\section{Introduction}

This article concerns the problem of computing equilibrium averages 
of time homogeneous, ergodic Markov chains in the presence of 
metastability. A Markov chain is said to be {\em metastable} if it 
has typically very long sojourn times in certain subsets of 
state space,  called {\em metastable sets}. 
A new method, called the {parallel replica method} (or 
ParRep), is proposed for efficiently simulating 
equilibrium averages in this setting.

Markov chains are widely used to model physical systems. 
In computational statistical physics -- the main setting 
for this article -- Markov 
chains are used to understand macroscopic properties of 
matter, starting from a mesoscopic or microscopic description. 
Equilibrium averages then correspond to bulk properties of 
the physical system under consideration, like average 
density or internal energy. A popular class of such models 
are the Markov State Models~\cite{chodera,prinz,schutte}. 
Markov chains also arise as 
time discretizations of continuous time models  
like the Langevin dynamics~\cite{tonybook}, a popular 
stochastic model for molecular dynamics. For examples 
of Markov chain models not obtained from an 
underlying continuous time dynamics, see 
for example~\cite{bovier,scoppola}. 
It should be emphasized that the discrete in 
time setting is generic -- even if the underlying 
model is continuous in time, what must be simulated 
in practice is a time-discretized version.

In computational statistical physics, metastability arises 
from entropic barriers, which are bottlenecks in state 
space, as well as energetic barriers, which are 
regions separating metastable states through which 
crossings are unlikely (due to, for example, 
high energy saddle points in a potential energy 
landscape separating the states). See Figures~1--2 
for simple examples of entropic and energetic 
barriers.

The method proposed here is closely related 
to a recently proposed algorithm~\cite{aristoff}, 
also called ParRep, for efficient simulation 
of metastable Markov chains on a coarsened state space. 
That algorithm can be considered an adaptation of 
A.F. Voter's parallel replica dynamics~\cite{voter}
to a discrete time setting. (For a mathematical analysis of 
A.F. Voter's original algorithm, see~\cite{lebris}.) 
ParRep was shown to be 
consistent with an analysis based on  
quasistationary distributions (QSDs), or local 
equilibria associated with each metastable set. 
ParRep uses parallel processing 
to explore phase space more efficiently in real 
time. A cost of the parallelization is that 
only a {\em coarse} version of the Markov chain dynamics, defined on 
the original state space modulo the collection of metastable sets, 
is obtained.  
In this article it is shown that a simple 
modification of the ParRep algorithm of~\cite{aristoff} 
nonetheless allows for  
computation of equilibrium averages of the 
original, {\em uncoarsened} Markov chain.

\begin{figure}
\vskip-140pt
\centerline{\includegraphics[scale=0.65]{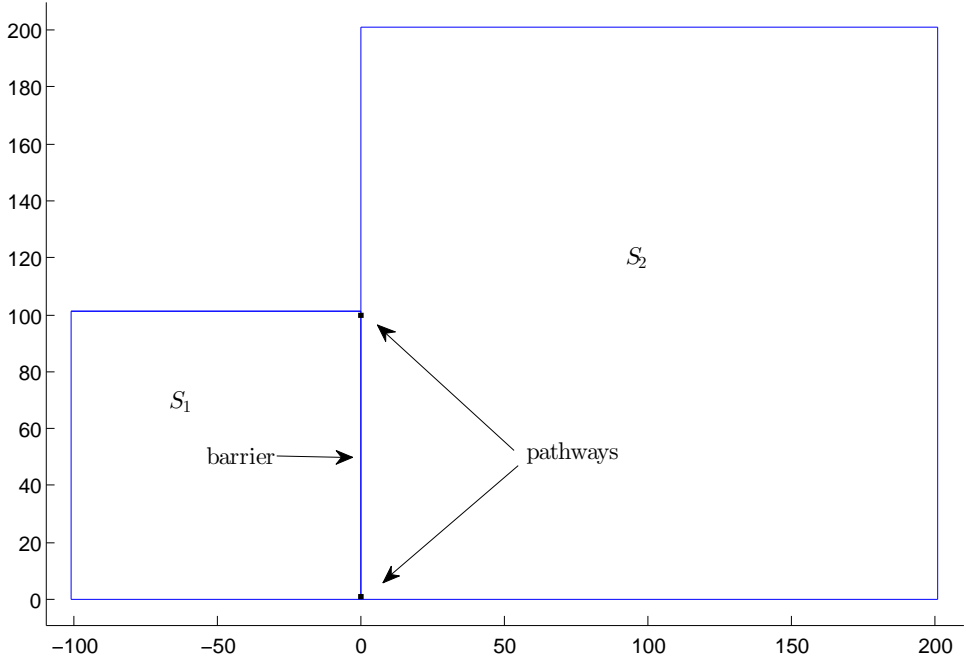}}
\vskip-130pt
\caption{A random walk $X_n$ on state space 
$\{-1,-2,\ldots,-100\}^2 \cup \{1,2,\ldots,200\}^2$ with an 
entropic barrier. At each step, a direction up, down, left or 
right is selected at random, each with probability $1/4$. 
Then $X_n$ moves one unit in this direction, 
provided this does not result in crossing a barrier, i.e., one of the 
edges of the two boxes pictured. The walk can cross from the 
left box to the right box only through the narrow pathways indicated. 
The metastable sets are ${\mathcal S} = \{S_1,S_2\}$.}
\end{figure}

\begin{figure}
\vskip-130pt
\centerline{\includegraphics[scale=0.65]{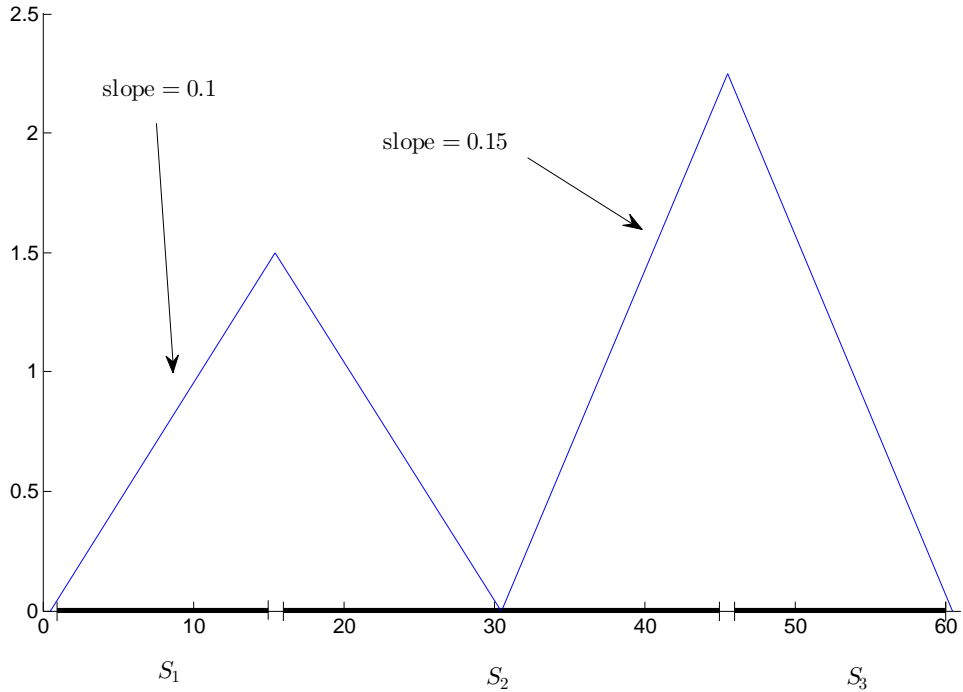}}
\vskip-130pt
\caption{A random walk $X_n$ on state space 
$\{1,2,\ldots,60\}$ with energy barriers. 
The random walk moves one unit left or right according to a 
biased coin flip: If $X_n = x$ and the slope 
of the pictured graph at $x$ is $m$, then with probability 
$1/2 + m$, $X_{n+1} = \max\{x-1,1\}$, and 
with probability $1/2-m$, $X_{n+1} = \min\{x+1,60\}$. The 
metastable sets are ${\mathcal S} = \{S_1,S_2,S_3\}$.}
\end{figure}

The ParRep algorithm proposed here is very general. It can 
be applied to {any} Markov chain, and gains in 
efficiency can be expected when the chain is 
metastable and the metastable sets can be properly 
identified (either a priori or on the fly). In particular, 
it can be applied to metastable Markov chains with 
both energetic and entropic barriers, and no assumptions 
about barrier heights, temperature or reversibility 
are required. 
While there exist many methods for sampling from 
a distribution, most methods, particularly 
in Markov chain Monte Carlo~\cite{rubinstein}, rely on a priori knowledge of  
relative probabilities of the distribution. In contrast with these methods, 
ParRep does not require {\em any} information about the 
equilibrium distribution of the Markov chain.

The article is organized as follows. Section~\ref{sec:QSD} 
defines the QSD 
and notation used throughout.  
Section~\ref{sec:ParRep} introduces the ParRep 
algorithm for computing equilibrium averages 
(Algorithm~\ref{alg1}). In Section~\ref{sec:numerics}, 
consistency of the algorithm is demonstrated on 
the simple models pictured in Figures~1 and~2. 
A proof of consistency in an idealized setting is given 
in the Appendix.
Some concluding remarks are made in Section~\ref{sec:conclude}.

\section{Notation and the quasistationary distribution}\label{sec:QSD}
Throughout, $(X_n)_{n\ge 0}$ is a time homogeneous 
Markov chain on a standard Borel state space, and 
${\mathbb P}_\xi$ is the associated measure when $X_0 \sim \xi$, 
where $\sim$ denotes equality in law. 
All sets and functions are assumed measurable without explicit mention. 
The collection of metastable sets will be written ${\mathcal S}$, 
with elements of ${\mathcal S}$ denoted by $S$. 
Formally, ${\mathcal S}$ is simply a set of disjoint subsets of state space.

\begin{definition}\label{D1}
A probability measure $\nu$ with support in $S$ is 
called a quasistationary distribution (QSD) if for 
all $n\ge 0$ and all $A \subset S$, 
\begin{equation*}
\nu(A) = {\mathbb P}_\nu\left(X_n \in A\,|\,X_1 \in S,\ldots, X_{n} \in S \right).
\end{equation*}
\end{definition}
That is, if $X_n \sim \nu$, then conditionally on $X_{n+1} \in S$,  
$X_{n+1} \sim \nu$. It is not hard to check that, if for 
every probability measure $\xi$ 
supported in $S$ and every $A \subset S$,
\begin{equation}\label{QSD2}
  \nu(A) = \lim_{n\to \infty} 
{\mathbb P}_\xi\left(X_n \in A\,|\,X_1 \in S,\ldots,X_{n} \in S\right),
\end{equation}
then $\nu$ is the unique QSD in $S$. Informally, if~\eqref{QSD2} 
holds, then $(X_n)_{n\ge 0}$ is close to $\nu$ whenever it spends 
a sufficiently long time in $S$ without leaving. 
Of course $\nu$ depends on $S$, but 
this will not be indicated explicitly.

\section{The ParRep algorithm}\label{sec:ParRep}

Let $(X_n)_{n\ge 0}$ be ergodic with equilibrium measure $\mu$, and fix a 
bounded real-valued function $f$ defined on state space.  
The output of ParRep is an estimate of the average of $f$ with respect 
to $\mu$. The algorithm requires existence of a unique QSD in 
each metastable set, so it is assumed for each $S \in {\mathcal S}$ 
there is a unique $\nu$ satisfying~\eqref{QSD2}. 
This assumption holds under very general mixing conditions; 
see~\cite{aristoff}.

The user-chosen parameters of the algorithm are the 
number of replicas, $N$; the decorrelation 
and dephasing times, $T_{corr}$ and $T_{phase}$; 
and a polling time, $T_{poll}$. The parameters 
$T_{corr}$ and $T_{phase}$ are closely related to the 
time needed to reach the QSD; both may depend 
on $S \in {\mathcal S}$. To emphasize this, sometimes 
$T_{corr}(S)$ or $T_{phase}(S)$ are written.  
The parameter $T_{poll}$ is a polling time at which the 
parallel replicas resynchronize. See below
for further discussion. 
\begin{algorithm}\label{alg1}
Set the simulation clock to zero: $T_{sim} = 0$, 
and set $f_{sim} = 0$. Then iterate the following:
\begin{itemize}
\vskip10pt
\item {\bf \em Decorrelation Step.} 

\noindent Evolve $(X_n)_{n\ge 0}$ from 
time $n = T_{sim}$ until time $n = \sigma$, where 
$\sigma$ is the smallest number $n \ge T_{sim}+T_{corr}-1$ 
such that there exists $S \in {\mathcal S}$ with 
$X_n \in S,\, X_{n-1} \in S,\ldots, X_{n-T_{corr}+1} \in S$.
Meanwhile, update
\begin{equation*}
f_{sim} = f_{sim} + \sum_{n=T_{sim}+1}^{\sigma} f(X_n).
\end{equation*}
Then set $T_{sim}= \sigma$ and proceed to the Dephasing Step, 
with $S$ now the metastable state having $X_\sigma \in S$.
\vskip10pt
\item {\bf \em Dephasing Step.} 

\noindent Generate $N$ independent 
samples, $x_1, \ldots, x_N$, 
of the QSD $\nu$ in $S$. Then 
proceed to the Parallel Step. 
\vskip10pt
\item {\bf \em Parallel Step.} 

\noindent (i) Set $M = 1$ and 
$\tau_{acc} = 0$. Let $(X_n^1)_{n\ge 0},\ldots,(X_n^N)_{n\ge 0}$ 
be replicas of $(X_n)_{n\ge 0}$, that is, Markov chains with the same law as 
$(X_n)_{n\ge 0}$ which are independent of 
$(X_n)_{n\ge 0}$ and one another. Set $X_0^1 = x_1$,..., $X_0^N = x_N$.

\vskip5pt
\noindent (ii) Evolve all the replicas 
from time $n = (M-1)T_{poll}$ 
to time $n = MT_{poll}$. 
\vskip5pt

\noindent (iii) If none 
of the replicas leave $S$ during this time, 
update 
\begin{align}
&f_{sim} = f_{sim} + \sum_{i=1}^N \sum_{j=(M-1)T_{poll}+1}^{MT_{poll}} f(X_j^i),\label{sum1} \\
&\tau_{acc} = \tau_{acc} + NT_{poll}, \notag\\
&M = M+1, \notag
\end{align}
and return to (ii) above. Otherwise, let $K$ be the smallest 
number such that $(X_n^K)_{n\ge 0}$ leaves $S$ 
during this time, let $\tau^K \in [(M-1)T_{poll}+1,MT_{poll}]$ 
be the corresponding first exit time, and 
update 
\begin{align}
&f_{sim} = f_{sim} +  \sum_{i=1}^{K-1} \sum_{j=(M-1)T_{poll}+1}^{MT_{poll}}  
f(X_j^i)+ \sum_{j=(M-1)T_{poll}+1}^{\tau^K} f(X_j^K), \label{sum2} \\
&\tau_{acc} = \tau_{acc} + (K-1)T_{poll} + (\tau^K-(M-1)T_{poll}).\notag
\end{align}
Then update $T_{sim} = T_{sim} + \tau_{acc}$, 
set $X_{T_{sim}} = X_{acc} := X_{\tau^K}^K$, and return to the Decorrelation Step. 
\end{itemize}
\end{algorithm} 
See Figure~3 for an illustration of the Parallel Step. 
The key quantity in the algorithm is 
the running average $f_{sim}/T_{sim}$, which is 
an estimate of the average of $f$ with respect to the 
equilibrium measure $\mu$:
\begin{equation*}
 \frac{f_{sim}}{T_{sim}} \approx \int f\,d\mu.
\end{equation*}
Some remarks on Algorithm~\ref{alg1} are in order. 

\begin{figure}
\vskip-60pt
\centerline{\includegraphics[scale=0.62]{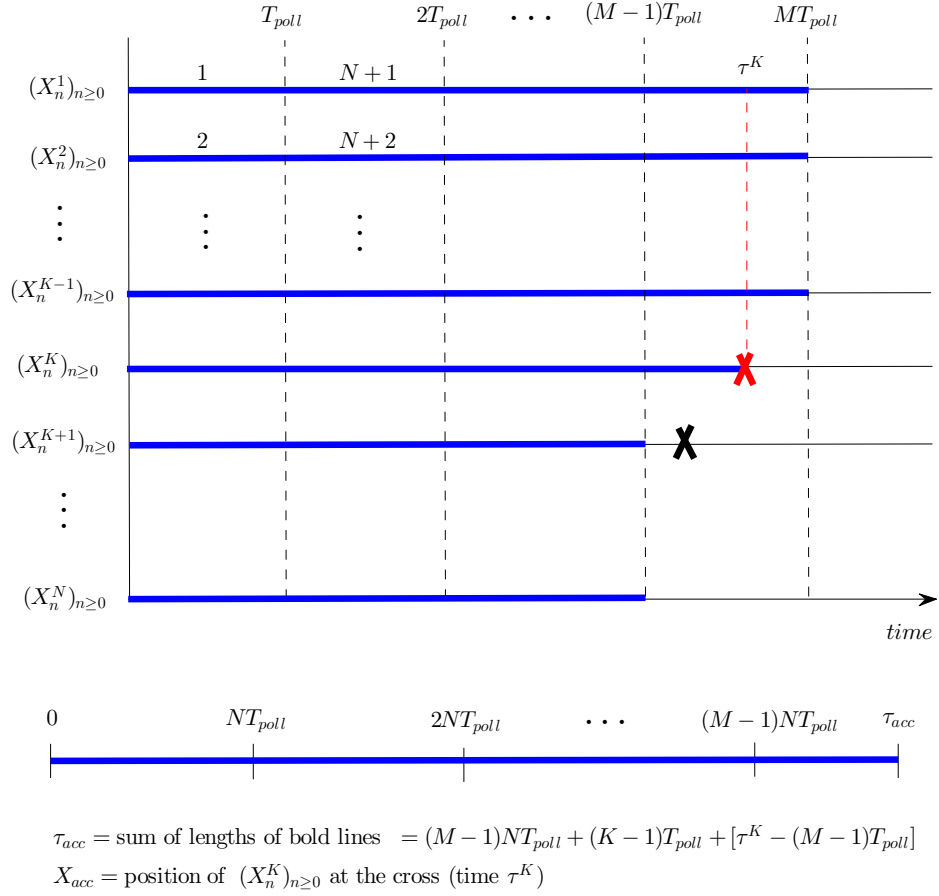}}
\vskip-70pt
\caption{Visualization of the Parallel Step of Algorithm~\ref{alg1}. 
The crosses represent exits from $S$. After $M$ loops internal to the Parallel Step, 
two of the replicas leave $S$, with $(X_n^K)_{n\ge 0}$, 
the one among these having the smallest index $K$, leaving at time $\tau^K$. 
The trajectories of all the replicas can be concatenated into a single 
long trajectory of length $\tau_{acc}$. This single long trajectory is obtained 
by running through the columns of width $T_{poll}$ from 
top to bottom, starting at the far left, in 
the order $1,2,\ldots,N+1,N+2,\ldots$ indicated. The time marginals 
of this long trajectory (except its right endpoint) 
are all distributed according to the QSD in $S$.}
\end{figure}

\vskip10pt
\begin{itemize}
 \item {\bf The Decorrelation Step.} The purpose of the 
Decorrelation Step is to reach the QSD in some 
metastable set. Indeed, the Decorrelation Step 
terminates exactly when $(X_n)_{n \ge 0}$ has spent 
$T_{corr}$ consecutive time steps in some 
metastable set $S$ -- so 
the position of $(X_n)_{n \ge 0}$ 
at the end of the Decorrelation Step can be 
considered an approximate sample from $\nu$, the QSD in $S$. 
The error in this approximation is controlled by 
the parameter $T_{corr}$. Larger values of 
$T_{corr}$ lead to increased accuracy but 
lessened efficiency; see the numerical tests in 
Section~\ref{sec:numerics} below, in particular  
Figures~4 and~6. During 
the Decorrelation Step, the dynamics of $(X_n)_{n\ge 0}$ 
is exact, so the contribution to $f_{sim}$ 
from the Decorrelation Step is exact. 
\vskip10pt

\item {\bf The Dephasing Step.} The Dephasing Step 
requires sampling $N$ iid copies of the QSD in $S$, where $S$ is
the metastable set from the end of the Decorrelation Step. 
The practitioner has flexibility in sampling these points. 
Essentially, one has to sample $N$ endpoints of trajectories 
of $(X_n)_{n\ge 0}$ that have remained in $S$ 
for a long enough time, with this time being 
controlled by the parameter $T_{phase}$. For example, the Dephasing Step can
be done with rejection sampling, keeping trajectories 
which have remained in $S$ for time $T_{phase}$. 
Alternatively, the QSD samples may be obtained via 
techniques related to the Fleming-Viot process; 
for details see~\cite{aristoff} and~\cite{binder}. 
This technique can be summarized as follows: 
$N$ replicas of $(X_n)_{n\ge 0}$, all starting in $S$, are 
independently evolved 
until one or several leave $S$; then  
each replica which left $S$ is restarted from the current position 
of another replica still inside $S$, 
chosen uniformly at random. After time $T_{phase}$ this 
procedure stops and the current positions of the 
replicas are used as the $N$ required samples of $\nu$. 

Under mild mixing conditions, convergence to the QSD 
is very fast. More precisely, 
the limit in the right hand side of~\eqref{QSD2} converges to $\nu$ 
geometrically fast in total variation norm~\cite{aristoff}. 
An analysis of the error associated with not 
exactly reaching the QSD will be the focus of 
another work. 
For an analysis of the error associated with 
not reaching the QSD in the original 
continuous-in-time version 
of the algorithm, see~\cite{gideon}.
In the metastable setting considered here, 
the average time to (approximately) reach the QSD 
in $S$ is assumed much smaller than the average 
time, starting at the QSD, to leave $S$. 
Indeed, this assumption can be considered 
the very definition of metastability. Gains 
in efficiency in ParRep are limited by the 
degree of metastability; see~\cite{binder} 
and the discussion in Section~\ref{sec:numerics} 
below. 

It is emphasized that $f_{sim}$ and $T_{sim}$ are left unchanged 
during the Dephasing Step. Contributions to $f_{sim}$ and $T_{sim}$ 
come only from the Decorrelation and Parallel Steps. 

\vskip10pt
\item {\bf The Parallel Step.} The purpose of the Parallel Step is twofold. First, it 
simulates an exit event from $S$, the metastable 
set from the end of the Decorrelation Step, starting from the QSD in $S$. 
This is consistent with the exit event that would have 
been observed if, in the Decorrelation Step, 
$(X_n)_{n\ge 0}$ had been allowed to continue evolving 
until leaving $S$:
\begin{theorem}\label{T0b}{\em (Proposition~4.5 of~\cite{aristoff}.)} Suppose the QSD sampling 
in the Dephasing Step of Algorithm~\ref{alg1} is {\em exact}. 
Then in the Parallel Step, $(\tau_{acc},X_{acc}) \sim (\tau,X_\tau)$, 
where $X_0 \sim \nu$, with $\nu$ the QSD in $S$ and $\tau := \min\{n\ge 0\,:\,X_n \notin S\}$. 
\end{theorem}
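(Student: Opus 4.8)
The plan is to reduce the statement to two elementary consequences of the QSD property and then match the two laws by a direct computation. Throughout I write $p := \mathbb{P}_\nu(X_1 \in S)$ and, for $A \subset S^c$, $\rho(A) := \mathbb{P}_\nu(X_1 \in A)$.

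First I would record the memoryless structure forced by the QSD. Definition~\ref{D1} together with the Markov property gives, for $X_0 \sim \nu$, that $\mathbb{P}_\nu(X_n \in S \mid X_1 \in S,\ldots,X_{n-1}\in S) = p$ and that $X_{n-1}\sim \nu$ on that conditioning event. Iterating yields $\mathbb{P}_\nu(\tau = n,\, X_\tau \in A) = p^{\,n-1}\rho(A)$ for every $n\ge 1$; in particular $\tau$ is geometric and independent of $X_\tau$. The same reasoning gives the renewal property I will rely on: conditionally on $\{\tau > T_{poll}\}$, the chain $(X_{T_{poll}+m})_{m\ge 0}$ is again a $\nu$-chain, independent of the first block given its endpoint, so a replica that survives a polling block of length $T_{poll}$ restarts from $\nu$.

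Second I would translate the Parallel Step into this renewal picture. By independence of the replicas and the renewal property, the survival-or-exit outcome of each replica in each block is an independent trial surviving with probability $r := p^{T_{poll}} = \mathbb{P}_\nu(\tau > T_{poll})$, and, conditionally on exiting within a block, the within-block exit time and exit location are independent of everything else and distributed as $(\tau,X_\tau)$ conditioned on $\{\tau \le T_{poll}\}$, i.e.\ with law $p^{\,t-1}\rho(A)/(1-r)$. Letting $G_i$ be the number of blocks replica $i$ survives before exiting, the $G_i$ are iid with $\mathbb{P}(G_i \ge g) = r^g$, and the stopping rule of the Parallel Step reads $M-1 = \min_i G_i$ and $K = \min\{i : G_i = \min_j G_j\}$, with $\tau_{acc} = (M-1)NT_{poll} + (K-1)T_{poll} + t^K$, where $t^K := \tau^K - (M-1)T_{poll} \in [1,T_{poll}]$. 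A short order-statistics calculation then gives $\mathbb{P}(\min_i G_i = g,\, K = k) = r^{\,gN + (k-1)}(1-r)$. Combining this with the conditional within-block law and using $r = p^{T_{poll}}$, the joint probability of $\{\min_i G_i = g,\, K=k,\, t^K = t,\, X_{acc}\in A\}$ collapses to $p^{\,\tau_{acc}-1}\rho(A)$ with $\tau_{acc} = gNT_{poll} + (k-1)T_{poll} + t$. The last step is to note that $(g,k,t)\mapsto gNT_{poll} + (k-1)T_{poll} + t$ is a bijection onto $\{1,2,\ldots\}$ — it is the mixed-radix representation of $\tau_{acc}-1$ with digits $t-1\in[0,T_{poll})$, $k-1\in[0,N)$, and $g\ge 0$ — so exactly one triple contributes to each value $n$. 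Summing gives $\mathbb{P}(\tau_{acc}=n,\,X_{acc}\in A) = p^{\,n-1}\rho(A) = \mathbb{P}_\nu(\tau=n,\,X_\tau\in A)$, which is the claim.

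I expect the main obstacle to be the bookkeeping in the second step rather than any hard estimate: one must argue that at the start of each block the surviving replicas are independent and each exactly $\nu$-distributed, and that the within-exit-block data of replica $K$ factorizes off from the block count, even though the conditioning ``no replica has left yet'' couples the replicas through the stopping rule. The clean way to handle this is to construct all $N$ replicas as independent $\nu$-chains run over all blocks in advance, read off the independent geometric counts $G_i$ together with the independent within-exit-block data $D_i$ (the factorization $G_i \perp D_i$ following from the formula $p^{\,n-1}\rho(A)$ above), and only then apply the deterministic rule $M-1=\min_i G_i$; since $K$ is a function of the $G_i$ alone, $D_K$ retains its marginal law conditionally on $(\{G_i\},K)$, which is precisely what the computation uses. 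The QSD renewal property guarantees this construction has the same law as the algorithm, and everything else is the elementary geometric calculation above.
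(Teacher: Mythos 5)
Your proposal is correct, and it is worth noting at the outset that the paper itself offers no proof of this statement: Theorem~\ref{T0b} is quoted as Proposition~4.5 of~\cite{aristoff} (and restated as Theorem~\ref{T0} in the Appendix), so your argument is filling in a proof the paper delegates to a reference. What you wrote is a complete and sound version of that argument, and it rests on exactly the right ingredients. The single-chain factorization $\mathbb{P}_\nu(\tau = n,\, X_\tau \in A) = p^{n-1}\rho(A)$ (with $p = \mathbb{P}_\nu(X_1 \in S)$, $\rho(A) = \mathbb{P}_\nu(X_1 \in A)$ for $A \subset S^c$) follows by iterating Definition~\ref{D1} with the Markov property, and it simultaneously gives the geometric law of $\tau$, the independence of $\tau$ and $X_\tau$, and the within-replica factorization $G_i \perp D_i$ that you need; your order-statistics count $\mathbb{P}(\min_i G_i = g,\, K = k) = r^{gN+k-1}(1-r)$ checks out, as does the collapse $r^{gN+k-1}p^{t-1} = p^{\tau_{acc}-1}$ and the mixed-radix bijection $(g,k,t) \mapsto gNT_{poll}+(k-1)T_{poll}+t$ onto $\{1,2,\ldots\}$, which is precisely the concatenation ordering of Figure~3. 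Your final paragraph also correctly identifies and resolves the one genuinely delicate point, namely that the stopping rule couples the replicas: constructing the $N$ chains in full as independent $\nu$-chains and observing that $(M,K)$ is a function of the block counts $\{G_i\}$ alone, so that $D_K$ keeps its marginal law given $(\{G_i\},K)$, is a legitimate coupling argument, since the algorithm's output is a measurable function of the replica paths up to time $MT_{poll}$ and $\{M=m\}$ depends only on the paths up to time $mT_{poll}$. Two cosmetic cautions: your $p$ is the \emph{survival} probability, whereas the paper's Theorem~\ref{T0} uses $p$ for the \emph{exit} probability $\mathbb{P}_\nu(X_1 \notin S)$, so the notations clash; and the computation implicitly assumes $\mathbb{P}_\nu(X_1 \notin S) > 0$ (so that $1-r>0$), which is harmless since otherwise $S$ is absorbing and neither side of the identity is nondegenerate (this is ruled out elsewhere in the paper by Assumption~\ref{A4}).
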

The gain in efficiency 
in ParRep, compared to direct serial simulation, 
comes from the use of parallel processing 
in the Parallel Step. The wall-clock time speedup -- 
the ratio of average serial simulation time to the ParRep simulation 
time of the exit event -- scales like $N$, though the 
gain in efficiency in ParRep as a whole depends 
also on $T_{corr}$, $T_{phase}$ and the degree of metastability 
of the sets in ${\mathcal S}$. 

Second, the Parallel Step includes a contribution 
to $f_{sim}$. 
As the fine scale dynamics of $(X_n)_{n \ge 0}$ in 
$S$ are not retained in the Parallel step, this 
contribution is not exact. It is, however, consistent {\em 
on average}, which is sufficient for the computation 
of equilibrium averages. This can be understood as follows. 
Concatenate all the trajectories of all the replicas into 
a single long trajectory by following the procedure 
indicated in Figure~3. The resulting trajectory 
has a probability law that is of course different 
from that of $(X_n)_{n \ge 0}$ starting from the QSD $\nu$ in $S$. 
However, in light of Definition~\ref{D1}, 
the {\em time marginals} of this trajectory (except the right endpoint) 
are all distributed according to $\nu$. Moreover, from 
Theorem~\ref{T0b}, the total length of this concatenated 
trajectory has the same law as 
that of $(X_n)_{n \ge 0}$ started from the QSD in $S$ and 
stopped at the first exit time from $S$. So by linearity 
of expectation, the 
contribution to $f_{sim}$ from the Parallel Step 
is consistent on average. See the Appendix
for proofs of these statements in an idealized setting.
\vskip10pt
\item {\bf Other remarks.} The parameter $T_{poll}$ is a 
polling time at which the possibly asynchronous parallel 
processors in the Parallel Step resynchronize. For the 
Parallel Step to be finished correctly, one has to wait 
until the first $K$ processors have completed $MT_{poll}$ time 
steps. If the processors are nearly synchronous or communication 
between them is cheap, one can take $T_{poll} = 1$. 

The metastable sets ${\mathcal S}$ need not 
be known a priori. In many applications, they can 
be identified on the fly; for example, when the 
metastable sets are the basins of attraction of a 
potential energy, they can be found efficiently 
on the fly by gradient descent. The reader is 
referred to~\cite{binder} as well as~\cite{aristoff} 
and references therein for examples of successful 
applications of related versions of ParRep in this setting.
\end{itemize}
\vskip10pt

\section{Numerical tests}\label{sec:numerics}

\subsection{Example 1: Entropic barrier}
Consider the Markov chain from Figure 1 
on state space $\{-1,-2,\ldots,-100\}^2 \cup \{1,2,\ldots,200\}^2$. 
The Markov chain evolves according to a random walk: at each 
time step it moves one unit up, down, left or 
right each with probability $1/4$, 
provided the result is inside state space; if not, 
the move is rejected and the position stays the same. 
There is one exception: If the current position is 
$(-1,1)$ or $(-1,100)$ and a move to the right 
is proposed, then the next position is $(1,1)$ or 
$(1,100)$, respectively; and if the current position is 
$(1,1)$ or $(1,100)$ and a move to the left 
is proposed, then the next position is $(-1,1)$ or 
$(-1,100)$, respectively. This 
Markov chain is ergodic with respect 
to the uniform distribution $\mu_{unif}$ on state space. 

\begin{figure}
\vskip-110pt
\centerline{\includegraphics[scale=0.75]{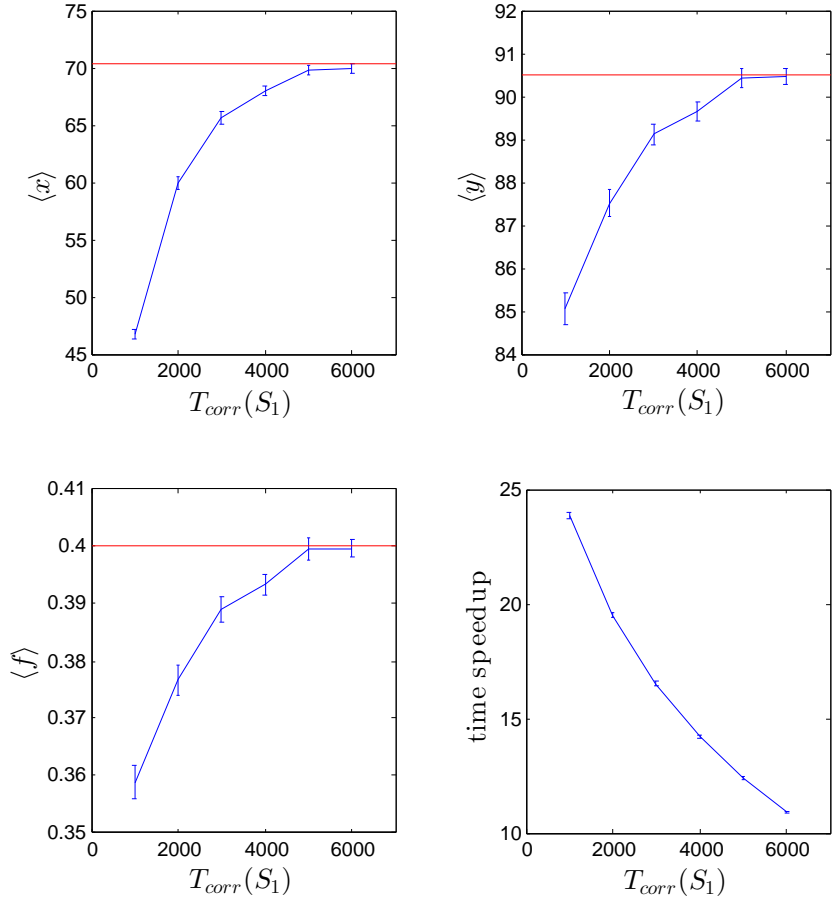}}
\vskip-110pt
\caption{Equilibrium average values 
and (average) time speedup vs. $T_{corr}(S_1)$ in ParRep simulations 
of the Markov chain from Example 1, with $N = 100$. The straight lines correspond 
to exact values. ParRep simulations were stopped when $T_{sim}$ 
first exceeded $5 \times 10^9$, and error bars are standard deviations 
obtained from $100$ independent trials.}
\end{figure}

\begin{figure}
\vskip-110pt
\centerline{\includegraphics[scale=0.75]{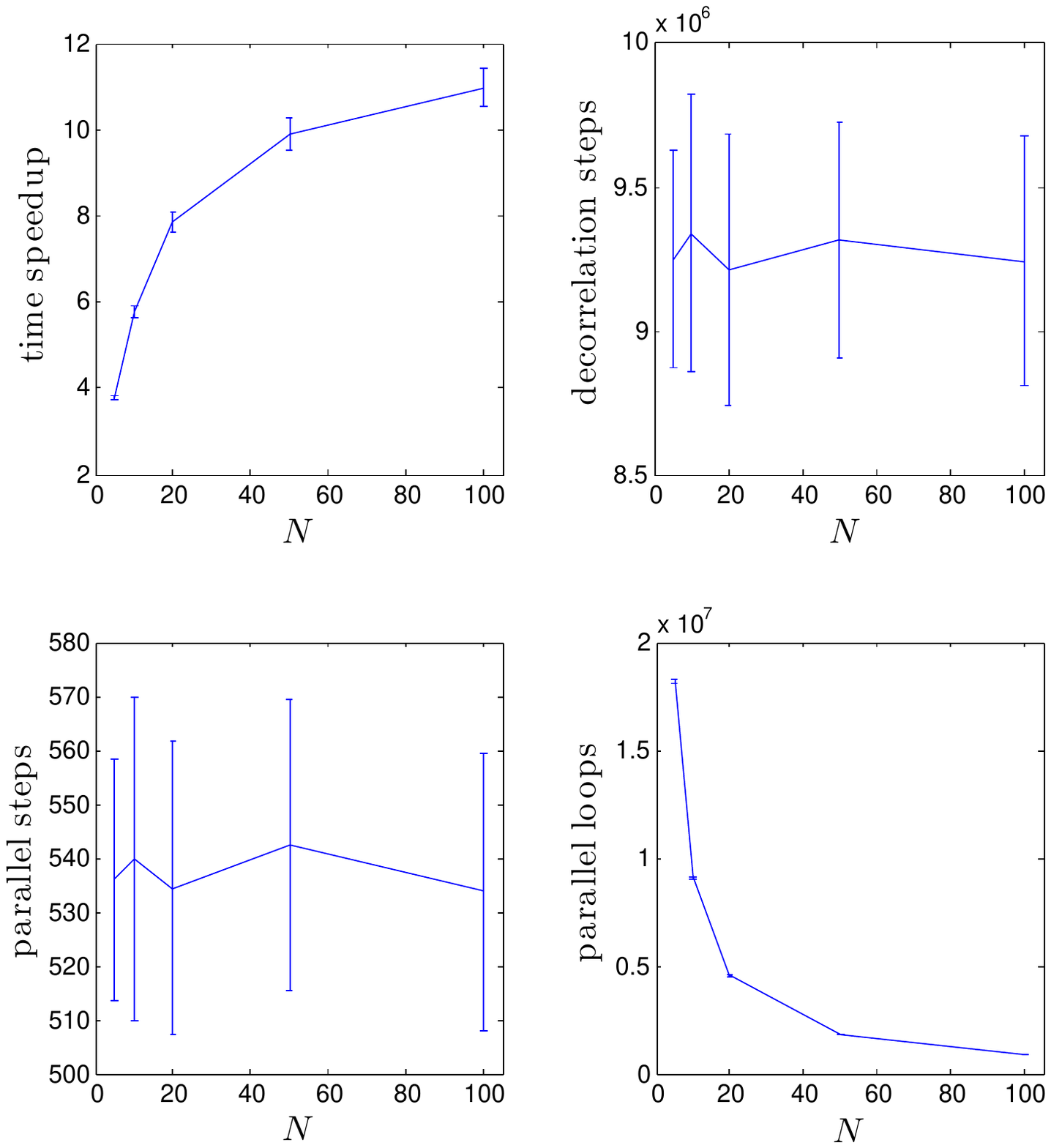}}
\vskip-110pt
\caption{Average time speedup factor, 
number of decorrelation steps, number of 
parallel steps, and number of parallel loops in ParRep simulations 
of the Markov chain from Example 1, with $T_{corr}(S_1) = 6000$. ParRep simulations were stopped when $T_{sim}$ 
first exceeded $10^8$, and error bars are standard deviations 
obtained from $100$ independent trials.}
\end{figure}

ParRep was performed on this system 
with $S_1:= \{-1,-2,\ldots,-100\}^2$, 
$S_2:= \{1,2,\ldots,200\}^2$, and ${\mathcal S} = \{S_1,S_2\}$. 
Parameters were always chosen so that 
$T_{corr} = T_{phase}$ and $T_{corr}(S_2) = 4T_{corr}(S_1)$, and 
QSD samples from the Dephasing Step were obtained using the Fleming-Viot-based 
technique described above. 

With $N = 100$ replicas and various values of $T_{corr}(S_1)$, 
ParRep was used to obtain average $x$- and $y$-coordinates 
with respect to $\mu_{unif}$ as well as the $\mu_{unif}$-probability 
to be in the upper half of the right hand side box, denoted by:
\begin{align*}
&\langle x \rangle := \int x \,d\mu_{unif}, \qquad
\langle y \rangle := \int y \,d\mu_{unif},\\
&\langle f \rangle := \int 1_{y \in [101,200]} \,d\mu_{unif}.
\end{align*}
Here $1_A$ denotes the indicator function of $A$. See Figure~4. 
Also computed was the average {\em time speedup}: namely,  
$T_{sim}$ divided by the ``wall clock time,'' defined as follows.  
Like $T_{sim}$, the wall clock time stars at zero. It 
increases by $1$ during each time step of $(X_n)_{n\ge 0}$ in the 
Decorrelation Step (consistent with $T_{sim}$), 
while it increases by $MT_{poll}$ in the Parallel Step (unlike $T_{sim}$, 
which increases by $\tau_{acc}$). The wall clock time also 
increases by $T_{phase}$ during the dephasing step 
(where $T_{sim}$ does not increase at all). Informally, 
the wall clock time corresponds to true clock time 
in an idealized setting where all the processors 
always compute one time step of $(X_n)_{n\ge 0}$ in exactly $1$ 
unit of time, and communication between processors takes zero time. 
As $T_{corr}$ increases, the time speedup decreases, but accuracy increases.

\begin{figure}
\vskip-110pt
\centerline{\includegraphics[scale=0.75]{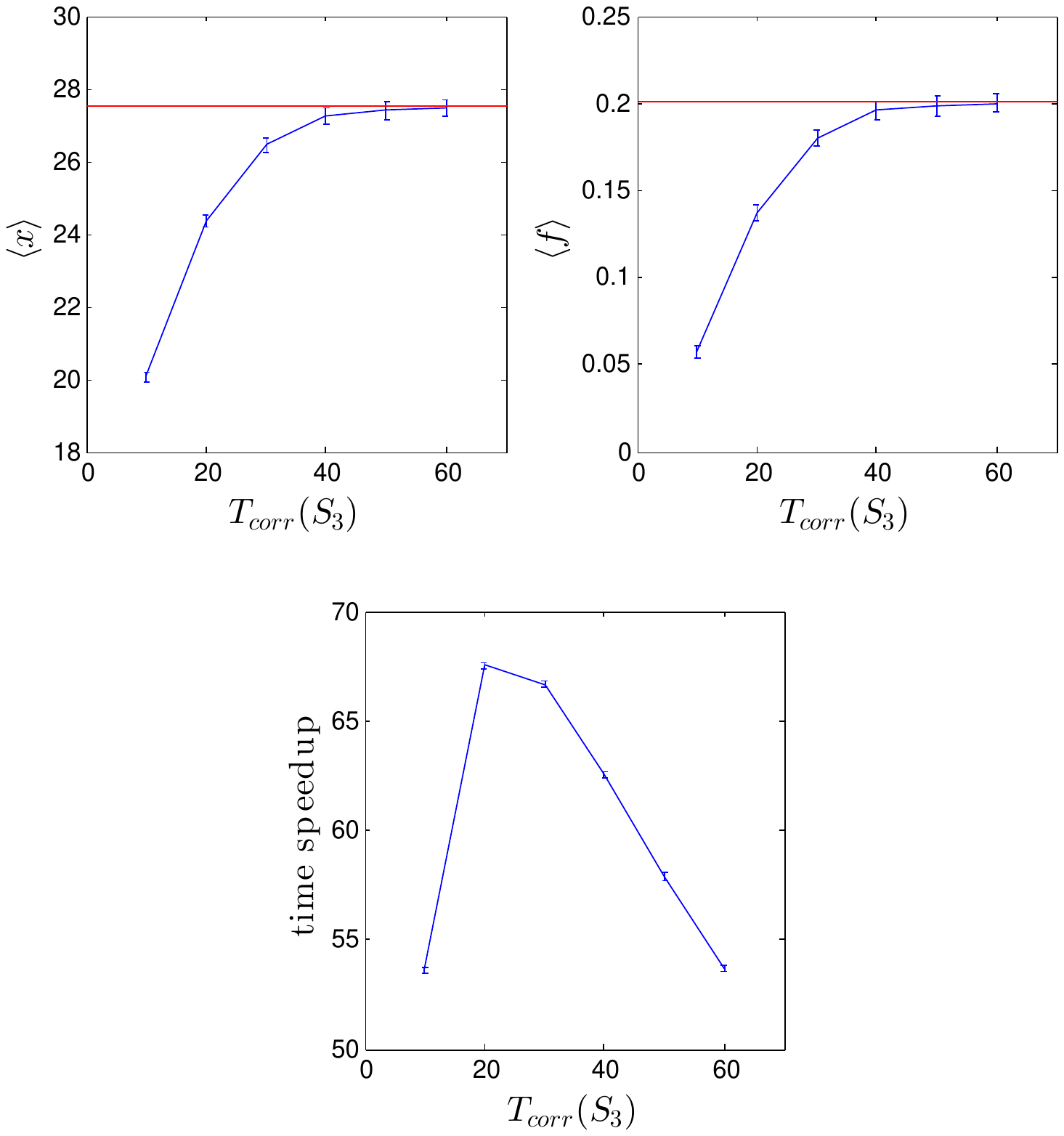}}
\vskip-110pt
\caption{Equilibrium average values 
and (average) 
time speedup vs. $T_{corr}(S_3)$ in ParRep simulations 
of the Markov chain from Example 2, with $N = 100$. The straight lines correspond 
to exact values. ParRep simulations were stopped when $T_{sim}$ 
first exceeded $10^9$, and error bars are standard deviations 
obtained from $100$ independent trials. For the smallest value of 
$T_{corr}(S_3)$, the Markov chain is typically close to the edges of 
$S_1$, $S_2$ or $S_3$, which results in shorter parallel steps and thus 
a smaller time speedup.}
\end{figure}

\begin{figure}
\vskip-110pt
\centerline{\includegraphics[scale=0.75]{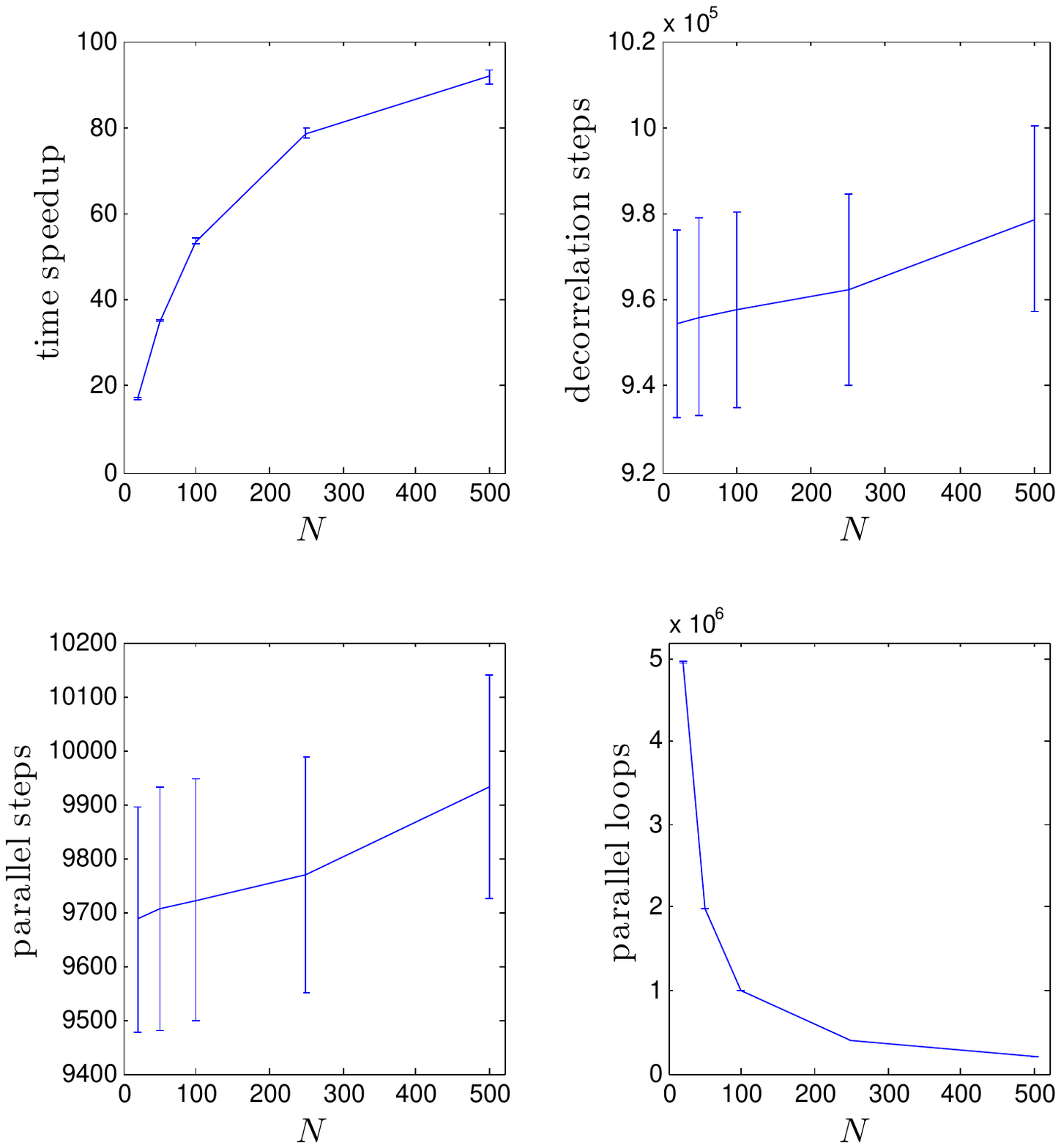}}
\vskip-110pt
\caption{Average time speedup factor, 
number of decorrelation steps, number of 
parallel steps, and number of parallel loops in ParRep simulations 
of the Markov chain from Example 2, when $T_{corr}(S_3) = 60$. ParRep simulations were stopped when $T_{sim}$ 
first exceeded $10^8$, and error bars are standard deviations 
obtained from $100$ independent trials.}
\end{figure}

Figure~5 shows the dependence of time speedup on the number 
of replicas, $N$, when $T_{corr}(S_1) = 6000$. To 
illuminate the dependence of time speedup on $N$, Figure~5 also includes the average 
(total) number of decorrelation steps, 
parallel steps, and parallel loops (i.e., loops internal to the 
parallel step -- in the notation of Algorithm~\ref{alg1}, there 
are $M$ loops internal to the parallel step). As $N$ increases,
the number of parallel loops decreases sharply, 
while the number of parallel steps and decorrelation steps remain 
nearly constant. Thus, with increasing $N$ the wall clock time 
spent in the parallel step falls quickly. The time speedup, however, is 
limited by the wall clock time spent in the decorrelation step, and   
so it levels off with increasing $N$. 
The value of $N$ at which this leveling off occurs depends on the degree 
of metastability in the problem, or slightly more precisely, 
the ratios, over all $S \in {\mathcal S}$, 
of the time scale for leaving $S$ to 
the time scale for reaching the QSD in $S$. In the limit as this 
ratio approaches infinity, the time speedup grows like $N$. See~\cite{binder} 
for a discussion of this issue in a continuous time version of ParRep.

\subsection{Example 2: Energetic barrier}
Consider the Markov chain from Figure~2 on state space $\{1,\ldots,60\}$. 
The Markov chain evolves according to a biased random walk: 
If $X_n = x$, then with probability 
$p_x$, $X_{n+1} = \max\{1,x-1\}$, while with probability $1-p_x$, 
$X_{n+1} = \min\{60,x+1\}$. Here, 
\begin{equation*}
 p_x := \begin{cases} 0.6, & x \in \{1,\ldots,15\} \\ 
        0.4, & x \in \{16,\ldots,30\} \\
        0.65, & x \in \{31,\ldots,45\}\\
        0.35, & x \in \{46,\ldots,60\}.
       \end{cases}
\end{equation*}
The equilibrium distribution $\mu_{bias}$ of this Markov 
chain can be explicitly computed. 

ParRep simulations were performed on this system 
with $S_1:= \{1,\ldots,15\}$, 
$S_2:= \{16,\ldots,45\}$, $S_3 = \{46,\ldots,60\}$ 
and ${\mathcal S} = \{S_1,S_2,S_3\}$. 
Parameters were always chosen so that 
$T_{corr} = T_{phase}$ and $T_{corr}(S_1) = T_{corr}(S_2) = \frac{3}{2}T_{corr}(S_3)$, 
and QSD samples from the Dephasing Step were again obtained using the Fleming-Viot-based 
technique.  

With $N = 100$ replicas and various values of $T_{corr}(S_3)$, 
ParRep was used to obtain the average $x$-coordinate 
with respect to $\mu_{bias}$ as well as the $\mu_{bias}$-probability 
to be in the right half of the interval, denoted by:
\begin{align*}
&\langle x \rangle := \int x \,d\mu_{bias},\\
&\langle f \rangle := \int 1_{x \in [31,60]} \,d\mu_{bias}.
\end{align*}
Also computed was the {time speedup}, defined exactly as above. 
Simulations were stopped when $T_{sim}$ first exceeded $10^9$. 
See Figure 6. Again, accuracy increases with $T_{corr}$, but the 
time speedup decreases with $T_{corr}$. 

Figure~7 shows the dependence of time speedup on the number 
of replicas, $N$, when $T_{corr}(S_3) = 60$. 
Also plotted are the average number of decorrelation steps, 
parallel steps, and parallel loops. The results are similar to Example~1, 
though the degree of metastability and time speedup are much larger.

\section{Conclusion}\label{sec:conclude}

A new algorithm, ParRep, for computing equilibrium averages of 
Markov chains is presented. The algorithm requires no knowledge 
about the equilibrium distribution of the Markov chain. Gains 
in efficiency are obtained by asynchronous parallel processing. 
For these gains to be achievable in practice, the Markov chain 
must possess some metastable sets. These 
sets need not be known a priori, but they should be 
identifiable on the fly; for example, in many applications in 
computational chemistry, the metastable sets can be
basins of attraction of a potential energy, 
identified on the fly by gradient descent. 
When metastable sets are present, the gains in efficiency 
are limited by the degree of metastability. See~\cite{binder} 
for a discussion and an application of a related version 
of ParRep in this setting.

Applications in computational chemistry seem numerous. 
Nearly all popular stochastic models of molecular dynamics 
are Markovian. Even when these models are continuous in time, 
to actually simulate the models a time discretization 
is required and the result is a Markov chain. Generically, 
these models have many metastable sets associated with 
different geometric arrangements of atoms at distinct
local minima of the potential energy or free energy. Many times 
the equilibrium distributions of these models are unknown -- for 
example if external forces are present -- yet it is still of 
great interest to sample equilibrium. Because of metastability 
it is often impractical or impossible 
to sample equilibrium with direct simulation. ParRep may 
put such computations within reach.

\section*{Appendix}\label{appendix}

In this Appendix, consistency of ParRep is proved in an 
idealized setting. 

\subsection{Idealized setting, assumptions, and main result}
Recall $(X_n)_{n\ge 0}$ is a Markov chain 
on a standard Borel state space $(\Omega, {\mathcal F})$. 
The collect ${\mathcal S} \subset {\mathcal F}$ of 
disjoint sets is assumed finite, with a 
unique QSD $\nu$ associated to each metastable set $S$. All probabilities, 
which may be associated to different spaces and random 
processes or variables, will be denoted by ${\mathbb P}$; the meaning 
will be clear from context. Probabilities 
associated with the initial distribution $\xi$ are 
denoted by ${\mathbb P}_\xi$. 
(If $\xi = \delta_x$ then ${\mathbb P}_x$ is written instead.) 
The corresponding 
expectations are written ${\mathbb E}$, ${\mathbb E}_\xi$, 
or ${\mathbb E}_x$. The norm 
$||\cdot||$ will always be total variation norm.

In all the analysis below, an idealized setting is assumed. 
It is defined by two conditions: 
the QSD is sampled {\em exactly} in the 
Dephasing Step (Idealization~\ref{A0}), 
and the QSD is reached {\em exactly} by time $T_{corr}$ (Idealization~\ref{A1}). 
These are idealizing assumptions  
in the sense that, in practice, the QSD is never exactly reached.    
\begin{idealization}\label{A0}
In the Dephasing Step, the points 
$x_1,\ldots,x_N$ are drawn independently and 
{\em exactly} from the QSD $\nu$ in $S$.
\end{idealization}
\begin{idealization}\label{A1}
For each $S \in {\mathcal S}$ there is 
a time $T_{corr} > 0$ such that, after spending 
$T_{corr}$ consecutive time 
steps in $S$, the Markov chain $(X_n)_{n\ge 0}$ is {\em exactly} 
distributed according to $\nu$. That is, for every 
$S \in {\mathcal S}$
and every $A \in {\mathcal F}$ with $A \subset {S}$, 
\begin{equation*}
 {\mathbb P}(X_{T_{corr}} \in A\,|\, X_1 \in S,\ldots,X_{T_{corr}} \in S) = \nu(A).
\end{equation*}
\end{idealization}
In practice, the word {\em exactly} must be replaced 
with {\em approximately}. The error associated with 
not exactly reaching the QSD will not be studied here. See however~\cite{gideon} 
for an analysis of this error in the continuous 
in time setting. Here, the idealized setting seems necessary 
to connect the ParRep dynamics with those of the original 
Markov chain. The idealizations allow these two 
dynamics to be synchronized after reaching the QSD, 
which is crucial in the analysis below. In particular, 
the analysis here cannot be modified in a simple way 
to allow for inexact convergence to the QSD.

By Idealization~\ref{A1}, at the end of each Decorrelation Step 
$(X_n)_{n\ge 0}$ is distributed {\em exactly} according to the QSD. 
By Idealization~\ref{A0}, the Parallel Step is exact:  
\begin{theorem}(Restated from~\cite{aristoff}.)\label{T0}
Let Idealization~\ref{A0} hold.
Then in the Parallel Step of Algorithm~\ref{alg1}, $(\tau_{acc},X_{acc}) \sim (\tau,X_\tau)$, 
where $X_0 \sim \nu$, with $\nu$ the QSD in $S$ and $\tau := \min\{n\ge 0\,:\,X_n \notin S\}$. 
Moreover, $\tau$ is a geometric random 
variable with parameter $p:= {\mathbb P}_\nu(X_1 \notin S)$, 
and $\tau$ is independent of $X_{\tau}$. 
\end{theorem}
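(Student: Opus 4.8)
The plan is to reduce everything to a single direct computation of the joint law of $(\tau_{acc},X_{acc})$, after first recording the corresponding law for one chain started at $\nu$.

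First I would establish the single-replica facts. Fix $X_0\sim\nu$ and set $p={\mathbb P}_\nu(X_1\notin S)$, $q=1-p$. For $A\subset S^c$ write $\beta(A):={\mathbb P}_\nu(X_1\in A)$, so $\beta(S^c)=p$. The claim is that
\[ {\mathbb P}_\nu(\tau=n,\,X_\tau\in A)=q^{\,n-1}\beta(A),\qquad n\ge 1. \]
Indeed, $\{\tau=n,\,X_\tau\in A\}=\{X_1\in S,\dots,X_{n-1}\in S,\,X_n\in A\}$. Conditioning on $\{X_1\in S,\dots,X_{n-1}\in S\}$ and invoking Definition~\ref{D1}, the law of $X_{n-1}$ is exactly $\nu$, so the Markov property gives ${\mathbb P}_\nu(X_n\in A\mid X_1\in S,\dots,X_{n-1}\in S)=\beta(A)$; applying the same argument repeatedly yields ${\mathbb P}_\nu(X_1\in S,\dots,X_{n-1}\in S)=q^{\,n-1}$. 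Summing over $A=S^c$ gives ${\mathbb P}_\nu(\tau=n)=q^{\,n-1}p$, so $\tau$ is geometric with parameter $p$; and since the displayed law factors as ${\mathbb P}_\nu(\tau=n)\cdot\beta(A)/p$, the variable $\tau$ is independent of $X_\tau$, whose law is $\beta/p$. This already proves the last two assertions for a single chain; it remains to transfer them to the Parallel Step.

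Next I would express the Parallel Step output through the replica exit times $\tau_i:=\min\{n\ge0:X^i_n\notin S\}$, which under Idealization~\ref{A0} are independent copies of $\tau$ (each $X_0^i\sim\nu$), with $X^i_{\tau_i}$ independent of $\tau_i$ of law $\beta/p$. The Parallel Step halts in the first polling block $m+1$ (so blocks $1,\dots,m$ are exit-free), with $K$ the least index exiting in that block and $\tau^K=\tau_K\in(mT_{poll},(m+1)T_{poll}]$. Writing $s:=\tau_K-mT_{poll}\in\{1,\dots,T_{poll}\}$, the update rules give $\tau_{acc}=mNT_{poll}+(K-1)T_{poll}+s$ and $X_{acc}=X^K_{\tau_K}$, and this decomposition of a value $t=\tau_{acc}$ into $(m,K,s)$ is unique because the offset $(K-1)T_{poll}+s$ ranges exactly over $\{1,\dots,NT_{poll}\}$. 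The event $\{\tau_{acc}=t,\,X_{acc}\in A\}$ is then precisely: all $N$ replicas survive blocks $1,\dots,m$; replicas $1,\dots,K-1$ also survive block $m+1$; replica $K$ exits at time $mT_{poll}+s$ into $A$; and replicas $K+1,\dots,N$ are unconstrained beyond surviving the first $m$ blocks. This is exactly the statement that the concatenated trajectory of Figure~3 stays in $S$ up to its final step.

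Finally, by independence of the replicas and the single-chain formula this probability equals
\[ \Big(\textstyle\prod_{i<K}q^{(m+1)T_{poll}}\Big)\,\big(q^{\,mT_{poll}+s-1}\beta(A)\big)\,\Big(\textstyle\prod_{i>K}q^{mT_{poll}}\Big), \]
and collecting the exponent of $q$ telescopes to $mNT_{poll}+(K-1)T_{poll}+s-1=t-1$, giving ${\mathbb P}(\tau_{acc}=t,\,X_{acc}\in A)=q^{\,t-1}\beta(A)$. This is identical to ${\mathbb P}_\nu(\tau=t,\,X_\tau\in A)$ from the first step, so $(\tau_{acc},X_{acc})\sim(\tau,X_\tau)$, and the geometric law of $\tau$ together with its independence from $X_\tau$ follow by reading off the same factorization. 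The one genuinely delicate point—the main obstacle—is the combinatorial description of the halting event in the third paragraph: one must verify that the asymmetric survival requirements (replicas $i<K$ survive one extra block, replicas $i>K$ only the first $m$) are exactly what ``$K$ is the least exiting index'' encodes, and that the $(m,K,s)$ decomposition is single-valued. Once this is pinned down, the exponent bookkeeping is routine and the two laws coincide on the nose.
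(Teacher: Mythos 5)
Your proof is correct. Note that there is no in-paper proof to compare against: Theorem~\ref{T0} is explicitly ``restated from~\cite{aristoff}'' (it is Proposition~4.5 of that reference, cf.\ Theorem~\ref{T0b}), so your proposal supplies a self-contained argument where this paper simply cites one, and it follows essentially the route of the cited reference. Both pillars of your argument check out: (i) iterating Definition~\ref{D1} with the Markov property gives ${\mathbb P}_\nu(\tau=n,\,X_\tau\in A)=q^{n-1}\beta(A)$, from which the geometric law of $\tau$, the law $\beta/p$ of $X_\tau$, and their independence all follow by factorization; (ii) your description of the halting event is exactly right --- the unique decomposition $t=mNT_{poll}+(K-1)T_{poll}+s$ with $s\in\{1,\dots,T_{poll}\}$, replicas $i<K$ surviving $(m+1)T_{poll}$ steps, replica $K$ exiting into $A$ at step $mT_{poll}+s$, and replicas $i>K$ constrained only to survive $mT_{poll}$ steps, which is precisely what ``$K$ is the least index exiting in block $m+1$'' means given that all replicas survive the first $m$ blocks. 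The exponent count $(K-1)(m+1)T_{poll}+mT_{poll}+s-1+(N-K)mT_{poll}=t-1$ is correct, so the joint law $q^{t-1}\beta(A)$ matches the single-chain law on the nose, which is the whole theorem. The only point worth flagging is the implicit assumption $p>0$: if $p=0$ neither $\tau$ nor the Parallel Step ever terminates and the statement is vacuous; in the paper's framework this is guaranteed by Assumption~\ref{A4}, while Theorem~\ref{T0} as stated assumes only Idealization~\ref{A0}.
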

In particular, the first exit time from $S$, starting at the QSD, 
is a geometric random variable which is independent 
of the exit position. This property is crucial for proving consistency of 
the Parallel Step (see~\cite{aristoff}), and will be useful below.

To prove the main result, a form of ergodicity for the original 
Markov chain is required:
\begin{assumption}\label{A2}
 The Markov chain $(X_n)_{n\ge 0}$ is uniformly ergodic: that is, 
 there exists a (unique) probability 
 measure $\mu$ on $(\Omega,{\mathcal F})$ such that 
\begin{equation*}
\lim_{n\to \infty} \,\sup_{\xi}||{\mathbb P}_\xi(X_n \in \cdot) - \mu|| = 0.
\end{equation*}
where the supremum is taken over all probability measures 
$\xi$ on $(\Omega,{\mathcal F})$.
\end{assumption}
Next, a Doeblin-like condition is assumed:
\begin{assumption}\label{A3}
 There exists $\alpha \in (0,1)$, 
 $S \in {\mathcal S}$ 
 with $\mu(S) > 0$,  
 and a probability measure $\lambda$ on 
 $(\Omega, {\mathcal F})$ supported in $S$ 
 such that the following holds: for all 
$x \in S$ and all $C \in {\mathcal F}$ 
 with $C \subset S$,
\begin{equation*}
{\mathbb P}_x(X_1 \in C) \ge \alpha \lambda(C).
\end{equation*}
\end{assumption}
Finally, a lower bound is assumed for escape rates from metastable states.
\begin{assumption}\label{A4}
There exists $\delta > 0$ such that for all $S \in {\mathcal S}$, 
\begin{equation*}
{\mathbb P}_\nu(X_1 \notin S) \ge \delta.
\end{equation*}
\end{assumption}
This simply says that none of the metastable sets are absorbing.
The following is the main result of this Appendix:
\begin{theorem}\label{MainTheorem}
Let Idealizations~\ref{A0}-~\ref{A1} 
and Assumptions~\ref{A2}-~\ref{A4} hold. Then 
for any probability measure $\xi$ on $(\Omega,{\mathcal F})$ 
and any bounded measurable function $f:\Omega \to {\mathbb R}$:
\begin{equation*}
{\mathbb P}_\xi\left(\lim_{T_{sim}\to \infty} \frac{f_{sim}}{T_{sim}} = \int_\Omega f\,d\mu\right) = 1.
\end{equation*}
\end{theorem}
The proof of Theorem~\ref{MainTheorem} is in Section~\ref{sec:Proof} 
below. It is emphasized that {\em Idealizations~\ref{A0}-~\ref{A1} 
and Assumptions~\ref{A2}-~\ref{A4} are assumed 
to hold throughout the remainder of the Appendix.} Furthermore, 
for simplicity it is assumed that $T_{corr}$ is the same for 
each $S \in {\mathcal S}$.

\subsection{Proof of main result}\label{sec:Proof}
The first step in the proof is to show that Theorem~\ref{MainTheorem} 
holds when the number of replicas is $N = 1$ 
(Sections~\ref{sec:N1}-~\ref{sec:EP}). Then 
this will be generalized to any number of replicas 
(Section~\ref{sec:MP}). It is known (see Chapter~7 of~\cite{Douc}) 
that Assumption~\ref{A2} is a 
sufficient condition for the following to hold:
\begin{lemma}\label{L1}
There exists a (unique) measure $\mu$ on 
$(\Omega, {\mathcal F})$ such that 
for all probability measures $\xi$ 
on $(\Omega, {\mathcal F})$ and 
all bounded measurable functions $f: \Omega \to {\mathbb R}$, 
\begin{equation*}
{\mathbb P}_\xi\left(\lim_{n\to \infty}\frac{f(X_0) + \ldots + f(X_{n-1})}{n} = \int_\Omega f \,d\mu\right) = 1.
\end{equation*}
\end{lemma}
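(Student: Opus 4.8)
The plan is to deduce the strong law from the regeneration structure that uniform ergodicity forces on the chain. The starting point is the standard fact that Assumption~\ref{A2} is equivalent to a global Doeblin condition: there exist an integer $m\ge 1$, a constant $\epsilon\in(0,1)$, and a probability measure $\phi$ on $(\Omega,\mathcal{F})$ such that $\mathbb{P}_x(X_m\in C)\ge\epsilon\,\phi(C)$ for every $x\in\Omega$ and every $C\in\mathcal{F}$. This global minorization --- valid uniformly in the starting point, and hence stronger than the localized minorization of Assumption~\ref{A3} --- is exactly the feature of uniform ergodicity I would exploit.

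Next I would apply Nummelin splitting to the kernel $P^m$ (equivalently, observe the chain at times that are multiples of $m$). The minorization lets one represent each such step as, with probability $\epsilon$, an independent draw from $\phi$ --- a \emph{regeneration} --- and otherwise a draw from the normalized residual kernel. This produces an increasing sequence of (full-chain) regeneration times $T_0<T_1<T_2<\cdots$, each a multiple of $m$, at which the chain restarts from $\phi$ independently of its past. The structural payoff is that the inter-regeneration lengths $L_j:=T_{j+1}-T_j$ are iid with a geometric, hence exponentially bounded, tail, and the block rewards $R_j:=\sum_{k=T_j}^{T_{j+1}-1}f(X_k)$ are iid for $j\ge 0$ and satisfy $|R_j|\le\|f\|_\infty L_j$, so both $R_j$ and $L_j$ are integrable.

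With this decomposition the strong law reduces to a renewal--reward statement. The classical iid strong law gives $\frac1J\sum_{j<J}R_j\to\mathbb{E}[R_1]$ and $\frac1J\sum_{j<J}L_j\to\mathbb{E}[L_1]$ almost surely. Counting the random number $J_n$ of completed blocks before time $n$ and controlling the single incomplete terminal block via the bound $|R|\le\|f\|_\infty L$ together with the integrability of $L_1$, one interpolates to obtain $\frac1n\sum_{k<n}f(X_k)\to\mathbb{E}[R_1]/\mathbb{E}[L_1]$ almost surely. To identify the limit I would use the regeneration representation of the invariant measure: the normalized expected occupation measure over one block, $C\mapsto\mathbb{E}\big[\sum_{k=T_0}^{T_1-1}1_{X_k\in C}\big]/\mathbb{E}[L_1]$, is $P$-invariant, and by the uniqueness in Assumption~\ref{A2} it must coincide with $\mu$; integrating $f$ against it gives $\mathbb{E}[R_1]/\mathbb{E}[L_1]=\int_\Omega f\,d\mu$.

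The main obstacle is the splitting construction itself and, relatedly, the role of the initial law $\xi$. Since the chain need not start from $\phi$, the first block $[0,T_0)$ has a law depending on $\xi$ and does not belong to the iid family; I would argue that $T_0<\infty$ almost surely --- regenerations occur with probability $\epsilon$ at each of the times $m,2m,\dots$ regardless of $\xi$, so the first one arrives after a geometrically bounded wait --- and that a single such block contributes $O(T_0/n)\to 0$ to the time average, leaving the limit independent of $\xi$. The genuinely technical points are carrying out Nummelin splitting rigorously on a general standard Borel space and verifying the exponential tail of $L_1$ uniformly in the starting point; these are standard but nontrivial, and are precisely the constructions worked out in Chapter~7 of~\cite{Douc}.
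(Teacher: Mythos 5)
The paper offers no proof of Lemma~\ref{L1}: it is invoked as a known consequence of Assumption~\ref{A2}, with a citation to Chapter~7 of~\cite{Douc}. Your regeneration argument is essentially the standard proof behind that citation, so you have reconstructed, rather than diverged from, the paper's (implicit) route, and the skeleton is correct: uniform ergodicity does imply an $m$-step global minorization (that equivalence is itself a classical but nontrivial theorem), splitting then yields regeneration times with geometric tails uniformly in the initial law, the renewal--reward step is routine for bounded $f$, the identification of the limit through invariance of the normalized expected block-occupation measure together with the uniqueness in Assumption~\ref{A2} is valid, and your handling of the $\xi$-dependent first block is right. One technical inaccuracy to flag: when $m>1$, the blocks of the \emph{original} chain obtained by splitting the $m$-skeleton are not iid but one-dependent, because the bridge values $X_{T_j-m+1},\ldots,X_{T_j-1}$ belonging to one block are correlated with the regenerated value $X_{T_j}$ starting the next; the repair is standard (apply the iid strong law separately to even- and odd-indexed blocks), but the iid claim as written is not literally true. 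Note also that an alternative staying entirely inside the paper's own toolkit is to verify the Harris-chain conditions of Section~\ref{Sec:Harris} with $A=B=\Omega$ and invoke Lemma~\ref{L5} --- the same machinery the paper deploys for the extended process $(Y_n)_{n\ge 0}$, where only the local minorization of Assumption~\ref{A3} is available --- though for $m>1$ that route must likewise pass through the skeleton.
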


\subsubsection{The ParRep process with one replica}\label{sec:N1}

Consider a stochastic process
$({\tilde X}_n)_{n\ge 0}$ which represents the 
underlying process in Algorithm~\ref{alg1} 
when the number of replicas 
is $N=1$. Loosely speaking, $({\tilde X}_n)_{n\ge 0}$ 
evolves like $(X_n)_{n\ge 0}$ in the Decorrelation Step, 
and like $(X_n^1)_{n\ge 0}$ in the Parallel Step (and it 
does not evolve during the Dephasing Step). 
More precisely, $({\tilde X}_n)_{n\ge 0}$ can be 
defined in the following way (writing $S$ 
for a generic element of~${\mathcal S}$):

\begin{itemize}
 \item[1.] If ${\tilde X}_n = x$ and $x\in S$ 
do the following. If ${\tilde X}_j \notin S$ 
for some $j \in \{n-1,n-2,\ldots,\max\{0,n-T_{corr}+1\}\}$, pick 
$x'$ from ${\mathbb P}_x(X_1 \in \cdot)$, let $X_{n+1} = x'$, 
update $n = n+1$, and repeat. Otherwise, 
update $n = n+1$ and proceed to 2.

\item[2.] If ${\tilde X}_n = x$ and $x \in S$, pick 
$z$ from the QSD in $S$, pick $x'$ from 
${\mathbb P}_z(X_1 \in \cdot)$, and let ${\tilde X}_{n+1} = x'$. 
If $x' \notin S$, update $n = n+1$ and return to 1. 
Otherwise, update $n = n+1$ and proceed to 3.

\item[3.] If ${\tilde X}_n = x$ and $x \in S$, pick 
$x'$ from ${\mathbb P}_x(X_1 \in \cdot)$, and let 
${\tilde X}_{n+1} = x'$. If $x' \in S$, update 
$n = n+1$ and repeat. Otherwise, update $n = n+1$ 
and return to 1.
\end{itemize}

Note that $({\tilde X}_n)_{n\ge 0}$ is not Markovian, since 
the next value of the process depends on the history of the 
process. Idealization~\ref{A1}, however, implies 
that $X_n$ and ${\tilde X}_n$ have 
the same law for each $n\ge 0$:
\begin{lemma}\label{L2}
If $X_0 \sim {\tilde X}_0$, then for 
every $n \ge 0$, $X_n \sim {\tilde X}_n$.
\end{lemma}

\subsubsection{An extended Markovian process}

Consider next an extended {\em Markovian} process 
$(Y_n)_{n\ge 0}$ with values in $\Omega \times {\mathbb Z}$,  
such that $(\pi_1(Y_n))_{n\ge 0}$ has the same law 
as $({\tilde X}_n)_{n\ge 0}$, where 
$\pi_i:\Omega 
\times {\mathbb Z} \to \Omega$
is projection onto the $i$th component:
\begin{equation*}
\pi_1(x,t) = x, \quad \pi_2(x,t) = t.
\end{equation*}
Loosely speaking, the second component of 
$(Y_n)_{n\ge 0}$ is a counter indicating how 
many consecutive 
steps the process has spent in a given state $S \in {\mathcal S}$. 
The counter stops at $T_{corr}$, even if it
continues to survive in $S$.
The first component of $(Y_n)_{n\ge 0}$ evolves exactly 
like $(X_n)_{n\ge 0}$, except when the second component 
is $T_{corr}-1$, in which case, starting at a sample 
of the QSD in $S$, the process is evolved  
one time step. It is convenient to describe $(Y_n)_{n\ge 0}$ 
more precisely as follows (writing $S$ for a generic 
element of ${\mathcal S}$):
\begin{itemize}
\item[1.] If $Y_n = (y,t)$ with $y \in S$ 
and $0 \le t < T_{corr}-1$, 
pick $y'$ from ${\mathbb P}_y(X_1 \in \cdot)$. 
If $y'\in S$, 
let $Y_{n+1} = (y',t+1)$; otherwise 
let $Y_{n+1} = (y',0)$.

\item[2.] If $Y_n = (y,T_{corr}-1)$ with $y \in S$, 
pick $z$ from the QSD $\nu$ in $S$, and pick $y'$ 
from ${\mathbb P}_z(X_1 \in \cdot)$. If 
$y' \in S$, let $Y_{n+1} = (y',T_{corr})$; 
otherwise let $Y_{n+1} = (y',0)$.

\item[3.] If $Y_n = (y,T_{corr})$ with $y \in S$, 
pick $y'$ from ${\mathbb P}_y(X_1 \in \cdot)$. 
If $y' \in S$, let 
$Y_{n+1} = (y',T_{corr})$; otherwise 
let $Y_{n+1} = (y',0)$.
\end{itemize}
The process $(Y_n)_{n\ge 0}$ is Markovian on state space
$(\Omega_Y, {\mathcal F}_Y)$, where 
\begin{equation*}
\Omega_Y =\Omega \times {\mathbb Z},\quad {\mathcal F}_Y = {\mathcal F} \otimes 2^{\mathbb Z}.
\end{equation*}
The following result is immediate from construction.
\begin{lemma}\label{L3a}
If $\pi_1(Y_0) \sim {\tilde X}_0$ and $\pi_2(Y_0) = 0$, then 
$(\pi_1(Y_n))_{n\ge 0}\sim({\tilde X}_n)_{n\ge 0}$.
\end{lemma}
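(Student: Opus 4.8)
The plan is to verify directly that the Markov process $(Y_n)_{n\ge 0}$ was built so that its counter $\pi_2(Y_n)$ records exactly the part of the history of $\pi_1(Y_n)$ that governs the (non-Markovian) transitions of $({\tilde X}_n)_{n\ge 0}$, and then to match the two processes' transition rules case by case by induction on $n$. No idealization is needed here: both $(Y_n)_{n\ge 0}$ and $({\tilde X}_n)_{n\ge 0}$ already have the QSD resampling built into their definitions, and the lemma only asserts that the first is a Markovian encoding of the second.

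First I would show that the counter is a deterministic function of the first-component trajectory. Writing $y_k = \pi_1(Y_k)$ and $t_k = \pi_2(Y_k)$, inspection of the three defining rules of $(Y_n)_{n\ge 0}$ shows that, starting from $t_0 = 0$, the counter increments by one on each step whose first component remains in the current set $S \in {\mathcal S}$ (rules 1 and 2), saturates at $T_{corr}$ (rule 3), and resets to $0$ as soon as the first component leaves $S$. Consequently there is a deterministic map $g_n$ with $t_n = g_n(y_0,\ldots,y_n)$; moreover $g_n(y_0,\ldots,y_n) = T_{corr}-1$ holds precisely at the first step of the current sojourn for which $y_n$ together with its $T_{corr}-1$ immediate predecessors all lie in a common $S$, i.e. precisely when $({\tilde X}_n)_{n\ge 0}$ would have just accumulated $T_{corr}$ consecutive steps in $S$ and would pass from its rule 1 to its rule 2.

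Next I would read off the conditional law of $y_{n+1}$ given the past. Since $(Y_n)_{n\ge 0}$ is Markovian and $t_n = g_n(y_0,\ldots,y_n)$, the conditional law of $y_{n+1}$ given $(Y_0,\ldots,Y_n)$ is a function of $(y_0,\ldots,y_n)$ alone, and the three rules prescribe it to be: evolve from $y_n$ under ${\mathbb P}_{y_n}(X_1 \in \cdot)$ when $t_n < T_{corr}-1$; resample $z \sim \nu$ and evolve one step under ${\mathbb P}_z(X_1 \in \cdot)$ when $t_n = T_{corr}-1$; and evolve from $y_n$ under ${\mathbb P}_{y_n}(X_1 \in \cdot)$ when $t_n = T_{corr}$. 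By the correspondence of the previous paragraph these three regimes coincide exactly with rules 1, 2 and 3 in the definition of $({\tilde X}_n)_{n\ge 0}$; hence the conditional law of $y_{n+1}$ given $(y_0,\ldots,y_n)$ agrees with that of ${\tilde X}_{n+1}$ given $({\tilde X}_0,\ldots,{\tilde X}_n)$ whenever the two histories coincide.

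Finally I would conclude by induction on $n$: the base case $n = 0$ is the hypothesis $\pi_1(Y_0)\sim {\tilde X}_0$ (with $\pi_2(Y_0)=0$), and the matching of one-step conditional laws propagates equality of the joint law of $(y_0,\ldots,y_n)$ and $({\tilde X}_0,\ldots,{\tilde X}_n)$ from $n$ to $n+1$. Equality of all finite-dimensional distributions then gives $(\pi_1(Y_n))_{n\ge 0}\sim({\tilde X}_n)_{n\ge 0}$. I expect the only delicate point to be the bookkeeping of the threshold correspondence in the second paragraph — in particular, the behavior during the initial transient $n < T_{corr}-1$, where the history window defining rule 1 of $({\tilde X}_n)_{n\ge 0}$ is clipped at time $0$, and the (trivial) case $y_n \notin \bigcup_{S \in {\mathcal S}} S$ in which both processes simply advance under the original kernel with the counter held at $0$. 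This is routine verification rather than a genuine difficulty, in keeping with the paper's assertion that the result is immediate from the construction.
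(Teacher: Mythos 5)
Your overall strategy---reconstruct the counter $\pi_2(Y_n)$ as a deterministic functional of the first-coordinate path, match the one-step conditional laws of the three regimes, and conclude by induction on finite-dimensional distributions---is precisely the verification the paper has in mind when it calls the lemma ``immediate from construction'' (the paper supplies no further argument), and your paragraphs on the counter reconstruction and the three-regime matching are correct as far as they go.

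The one point you cannot dismiss as routine is the initial transient, and as written that is a genuine gap. Read literally, rule 1 of $({\tilde X}_n)_{n\ge 0}$ tests for an excursion out of $S$ at times $j\in\{n-1,n-2,\ldots,\max\{0,n-T_{corr}+1\}\}$; when $n=0$ and ${\tilde X}_0\in S$ this index set is \emph{empty}, so the ``otherwise'' branch fires vacuously and the process passes to rule 2: ${\tilde X}_1$ is then produced by resampling $z\sim\nu$ and taking one step from $z$. The extended process does no such thing: with $\pi_2(Y_0)=0$ and $T_{corr}>1$, rule 1 of $(Y_n)_{n\ge 0}$ applies and $\pi_1(Y_1)\sim{\mathbb P}_{\pi_1(Y_0)}(X_1\in\cdot)$, since the counter cannot reach $T_{corr}-1$ before time $T_{corr}-1$. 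These two one-step laws differ in general, so under the literal reading your claimed threshold correspondence---and with it the induction---fails at the very first step whenever the chain starts inside a metastable set; indeed the lemma itself would be false as literally written. The repair is to state explicitly that rule 1 of $({\tilde X}_n)_{n\ge 0}$ is to be read as requiring a full window of $T_{corr}$ consecutive in-$S$ positions (so its condition cannot be met before time $T_{corr}-1$ of a sojourn beginning at time $0$), which is the reading forced by the Decorrelation Step of Algorithm~\ref{alg1}, where $\sigma\ge T_{sim}+T_{corr}-1$ is imposed. With that reading made explicit your case analysis and induction are complete; without it, the discrepancy deserves a sentence of resolution, not a wave of the hand.
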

Note that for the processes to have the same law, the 
counter of the extended process must start at zero. 
Lemmas~\ref{L2}-~\ref{L3a} give the
following relationship between the extended 
process and the original Markov chain: 
\begin{lemma}\label{L3}
If $\pi_1(Y_0) \sim X_0$ and $\pi_2(Y_0) = 0$, then for 
every $n \ge 0$, $\pi_1(Y_n) \sim X_n$.
\end{lemma}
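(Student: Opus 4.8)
The plan is to chain together the two immediately preceding lemmas, using the fact that equality in law (denoted $\sim$) is transitive. The content of Lemma~\ref{L3} is genuinely just a composition: Lemma~\ref{L3a} relates $(Y_n)$ to the auxiliary process $({\tilde X}_n)$, and Lemma~\ref{L2} relates $({\tilde X}_n)$ to the original chain $(X_n)$. Stringing these together gives the claim.

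First I would introduce a copy of the (non-Markovian) process $({\tilde X}_n)_{n\ge 0}$ with initial distribution matched to the hypothesis, namely ${\tilde X}_0 \sim X_0$. Because the statement involves only equality in law and not any pathwise relation, I am free to realize $({\tilde X}_n)_{n\ge 0}$ on whatever probability space is convenient; no joint coupling of $(X_n)$, $({\tilde X}_n)$, and $(Y_n)$ is required. The point is that both hypotheses of Lemma~\ref{L3} align precisely with the hypotheses of Lemma~\ref{L3a}: from $\pi_1(Y_0)\sim X_0\sim {\tilde X}_0$ together with $\pi_2(Y_0)=0$, Lemma~\ref{L3a} applies and yields $(\pi_1(Y_n))_{n\ge 0}\sim ({\tilde X}_n)_{n\ge 0}$, hence in particular $\pi_1(Y_n)\sim {\tilde X}_n$ for each $n\ge 0$.

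Next I would invoke Lemma~\ref{L2}: since ${\tilde X}_0\sim X_0$, that lemma gives ${\tilde X}_n\sim X_n$ for every $n\ge 0$. Combining the two equalities in law, for each fixed $n$ I have $\pi_1(Y_n)\sim {\tilde X}_n\sim X_n$, and by transitivity of equality in law I conclude $\pi_1(Y_n)\sim X_n$, as desired.

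There is no real obstacle here; the step is a routine bookkeeping composition. The only things to flag are that the condition $\pi_2(Y_0)=0$ is exactly what Lemma~\ref{L3a} needs (the counter must start at zero for the first component to reproduce the law of $({\tilde X}_n)$), and that the common initial law on $\Omega$ is what permits both Lemma~\ref{L2} and Lemma~\ref{L3a} to be applied simultaneously. All of the substantive probabilistic content—in particular the use of Idealization~\ref{A1} to identify the laws of $X_n$ and ${\tilde X}_n$—has already been discharged in the proof of Lemma~\ref{L2}, so nothing further needs to be reproved at this stage.
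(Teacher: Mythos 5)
Your proposal is correct and matches the paper exactly: the paper states Lemma~\ref{L3} as an immediate consequence of Lemmas~\ref{L2} and~\ref{L3a} (``Lemmas~\ref{L2}-~\ref{L3a} give the following relationship\ldots''), with no further argument, which is precisely your composition via transitivity of equality in law. Your added care in noting that only equality in law (not a pathwise coupling) is needed, and that $\pi_2(Y_0)=0$ is exactly the hypothesis Lemma~\ref{L3a} requires, is consistent with the paper's intent.
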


\subsubsection{Harris Chains}\label{Sec:Harris}

Let $(Z_n)_{n\ge 0}$ be a Markov chain 
on a standard Borel state space $(\Sigma,{\mathcal E})$. 
The process 
$(Z_n)_{n\ge 0}$ is a {\em Harris chain} 
if there exists $\epsilon > 0$, 
$A, B \in {\mathcal E}$, 
and a probability measure $\rho$ on $(\Sigma, {\mathcal E})$ 
supported in $B$ such that 
\begin{itemize}
\item[(i)] For all $x \in \Sigma$, we have ${\mathbb P}_x(\,\inf\{n\ge 0\,:\, Z_n \in A\} < \infty)>0$;
\item[(ii)] For all $x \in A$ and $C \in {\mathcal E}$ 
with $C \subset B$, we have ${\mathbb P}_x(Z_1 \in C) \ge \epsilon \rho(C)$.
\end{itemize}
See for instance Chapter 5 of~\cite{Durrett}. 
Intuitively, starting at any point 
in $A$, with probability at least $\epsilon$, the process 
is distributed according to $\rho$ 
after one time step. This allows 
ergodicity of the chain to be studied using 
ideas similar to the case where the state 
space is discrete. 
The trick is to consider an auxiliary process 
$({\bar Z}_n)_{n\ge 0}$ with values in
${\bar \Sigma} := \Sigma \cup \{\sigma\}$, 
where $\sigma$ corresponds to being distributed 
according to $\rho$ on $B$. More precisely:
\begin{itemize}
\item[1.] If ${\bar Z}_n = x$ and $x \in \Sigma \setminus A$, 
pick $y$ from 
${\mathbb P}_x(Z_1 \in \cdot)$ and let ${\bar Z}_{n+1} = y$.

\item[2.] If ${\bar Z}_n = x$ and $x \in A$: 
with probability $\epsilon$, let ${\bar Z}_{n+1} = \sigma$; 
with probability $1-\epsilon$, pick $y$ from 
$(1-\epsilon)^{-1}({\mathbb P}_x(Z_1 \in \cdot)-\epsilon \rho(\cdot))$ 
and let ${\bar Z}_{n+1} = y$.

\item[3.] If ${\bar Z}_n = \sigma$, pick $x$ from $\rho(\cdot)$. 
Then pick ${\bar Z}_{n+1}$ as in 1-2.
\end{itemize}
So $({\bar Z}_n)_{n\ge 0}$ is Markov on 
$({\bar \Sigma}, {\bar {\mathcal E}})$, where 
$\bar {\mathcal E}$ consists of sets of the form 
$C$ and $C \cup \{\sigma\}$ for $C \in {\mathcal E}$. 
The following result (see~\cite{Durrett}) relates 
the auxiliary process to the original process.
\begin{lemma}\label{L4}
Let $f:{\Sigma} \to {\mathbb R}$ be bounded and 
measurable, and 
define ${\bar f}: {\bar \Sigma} \to {\mathbb R}$ by 
\begin{equation*}
{\bar f}(x) = \begin{cases} f(x),& x \in \Sigma\\
\int_B f\,d\rho,& x = \sigma \end{cases}
\end{equation*}
Then for any
probability measure $\xi$ on 
$({\Sigma},{{\mathcal E}})$ and any $n \ge 0$, 
\begin{equation*}
{\mathbb E}_\xi[f(Z_n)] = {\mathbb E}_{\bar \xi}[{\bar f}({\bar Z}_n)], 
\end{equation*}
where ${\bar \xi}$ is the probability measure on 
$({\bar \Sigma},{\bar {\mathcal E}})$ defined by 
${\bar \xi}(A) = \xi(A)$ for $A \in {\mathcal E}$,
and ${\bar \xi}(\sigma)=0$.
\end{lemma}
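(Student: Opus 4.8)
The plan is to prove the identity by induction on $n$, for a fixed $\xi$ (hence a fixed $\bar\xi$) but with the inductive hypothesis quantified over \emph{all} bounded measurable functions at once. The engine is a single one-step \emph{intertwining relation} between the two kernels. Write $P$ for the transition kernel of $(Z_n)_{n\ge 0}$ and $\bar P$ for that of $(\bar Z_n)_{n\ge 0}$, and for bounded measurable $g:\Sigma\to\mathbb R$ let $\bar g:\bar\Sigma\to\mathbb R$ denote the extension $\bar g = g$ on $\Sigma$, $\bar g(\sigma) = \int_B g\,d\rho$, exactly as in the statement. The key claim is that extension commutes with the dynamics,
\begin{equation*}
\bar P\bar g = \overline{Pg},
\end{equation*}
where $\overline{Pg}$ denotes the extension of the function $Pg$ defined on $\Sigma$.

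I would first verify this identity by a three-way case analysis matching the three clauses defining $(\bar Z_n)_{n\ge 0}$. For $x\in\Sigma\setminus A$ the auxiliary kernel agrees with $P$, so $\bar P\bar g(x) = Pg(x)$ at once. For $x\in A$ one computes
\begin{equation*}
\bar P\bar g(x) = \epsilon\,\rho(g) + \int_\Sigma g(y)\,\big({\mathbb P}_x(Z_1\in dy) - \epsilon\,\rho(dy)\big) = Pg(x),
\end{equation*}
the factors of $1-\epsilon$ cancelling; this is the step where the precise form of the residual kernel matters, and condition (ii) of the Harris property is exactly what makes that residual a genuine nonnegative (indeed probability) measure. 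For $x=\sigma$ the process first redraws from $\rho$ and then moves as from a point of $\Sigma$, so $\bar P\bar g(\sigma) = \int_\Sigma (\bar P\bar g)\,d\rho = \int_\Sigma Pg\,d\rho = \rho(Pg)$, which is precisely $\overline{Pg}(\sigma)$.

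With the intertwining identity in hand, the induction is routine. For the base case $n=0$, since $\bar\xi(\{\sigma\})=0$ and $\bar\xi$ agrees with $\xi$ on $\Sigma$ while $\bar g=g$ there, one gets ${\mathbb E}_{\bar\xi}[\bar g(\bar Z_0)] = \int_\Sigma g\,d\xi = {\mathbb E}_\xi[g(Z_0)]$. For the inductive step, fix $g$ and condition on the final step via the Markov property:
\begin{equation*}
{\mathbb E}_{\bar\xi}[\bar g(\bar Z_n)] = {\mathbb E}_{\bar\xi}\big[(\bar P\bar g)(\bar Z_{n-1})\big] = {\mathbb E}_{\bar\xi}\big[\overline{Pg}(\bar Z_{n-1})\big].
\end{equation*}
Applying the inductive hypothesis at step $n-1$ to the bounded measurable function $Pg$ turns the right-hand side into ${\mathbb E}_\xi[(Pg)(Z_{n-1})] = {\mathbb E}_\xi[g(Z_n)]$, closing the induction; specializing $g=f$ gives the lemma.

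The main obstacle is the bookkeeping in the intertwining identity, and specifically the $\sigma$-case: one must check that the extension value $\bar g(\sigma) = \int_B g\,d\rho$ is simultaneously compatible with how the process \emph{enters} $\sigma$ (the $\epsilon$-mass split off at points of $A$, which pairs with this $\rho$-average) and with how it \emph{leaves} $\sigma$ (by redrawing from $\rho$). Once the one-step identity is confirmed on all three classes of states, no further difficulty arises, since the induction only ever invokes the hypothesis on functions of the form $Pg$, which are again bounded and measurable on $\Sigma$.
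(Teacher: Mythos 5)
Your proof is correct. Note that the paper itself does not prove this lemma at all --- it is quoted as a known result with a citation to Durrett --- so there is no internal proof to compare against; your argument is a complete, self-contained substitute. It is, moreover, essentially the standard one: your extension map $g \mapsto \bar g$ is the action on functions of the kernel $v$ with $v(x,\{x\})=1$ for $x\in\Sigma$ and $v(\sigma,\cdot)=\rho(\cdot)$, and your identity $\bar P\bar g=\overline{Pg}$ is the function-side statement of the intertwining $\bar P v = v P$ used in Durrett's treatment of Harris chains, with the induction then iterating it to $\bar P^n v = v P^n$. The two points of care --- that condition (ii) makes the residual kernel at points of $A$ a genuine probability measure, and that the hypothesis must be carried for all bounded measurable $g$ (so it can be applied to $Pg$) --- are both handled correctly.
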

The following theorem gives sufficient conditions for 
the Harris chain to be ergodic. Note that the 
conditions are in terms of the auxiliary chain.
\begin{lemma}\label{L5}
Let $(Z_n)_{n\ge 0}$ be a Harris chain on $(\Sigma,{\mathcal E})$ 
with auxiliary chain $({\bar Z}_n)_{n\ge 0}$. Assume that
\begin{equation*}
\sum_{n =1}^\infty {\mathbb P}_\sigma({\bar Z}_n = \sigma) = \infty
\end{equation*}
and
\begin{equation*}
\textup{g.c.d.}\{n\ge 0\,:\, {\mathbb P}_\sigma({\bar Z}_n = \sigma) > 0\} = 1.
\end{equation*}
Then there exists a (unique) measure $\eta$ 
on $(\Sigma, {\mathcal E})$ such that 
for any probability measure $\xi$ 
on $(\Sigma, {\mathcal E})$ and any 
bounded measurable function 
$f:\Sigma \to {\mathbb R}$, 
\begin{equation*}
{\mathbb P}_\xi\left(\lim_{n\to \infty}\frac{f(Z_0) + \ldots + f(Z_{n-1})}{n} = \int_\Sigma f \,d\eta\right) = 1.
\end{equation*}
Moreover, for any probability measure $\xi$ 
on $(\Sigma, {\mathcal E})$,
\begin{equation*}
\lim_{n \to \infty} ||{\mathbb P}_\xi(Z_n \in \cdot) - \eta|| = 0.
\end{equation*}
\end{lemma}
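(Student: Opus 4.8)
The plan is to exploit the fact that the auxiliary chain $(\bar Z_n)_{n\ge 0}$ possesses a genuine atom at $\sigma$: whenever the chain sits at $\sigma$ its next step is drawn from the fixed law $\rho$ (on $B$) independently of the past, so every visit to $\sigma$ is a regeneration time. Concretely, under ${\mathbb P}_\sigma$ I would set $R_1=\inf\{n\ge 1:\bar Z_n=\sigma\}$ and $R_{k+1}=\inf\{n>R_k:\bar Z_n=\sigma\}$, and observe via the strong Markov property that the excursions $(\bar Z_{R_k},\ldots,\bar Z_{R_{k+1}-1})$, together with their lengths $R_{k+1}-R_k$, form an i.i.d. sequence. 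This reduces the ergodic theory of $(\bar Z_n)$ to renewal theory for a single recurrent state, exactly as in the countable-state case, and Lemma~\ref{L4} is what will let me push conclusions back to $(Z_n)$.

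First I would record that the hypothesis $\sum_{n}{\mathbb P}_\sigma(\bar Z_n=\sigma)=\infty$ is, by the scalar renewal identity, equivalent to ${\mathbb P}_\sigma(R_1<\infty)=1$; hence $\sigma$ is visited infinitely often almost surely. I would then build the candidate invariant law as the normalized occupation measure of one cycle,
\[
\bar\eta(\,\cdot\,)=\frac{1}{{\mathbb E}_\sigma[R_1]}\,{\mathbb E}_\sigma\!\left[\sum_{n=0}^{R_1-1}{\mathbf 1}\{\bar Z_n\in\cdot\}\right],
\]
and verify by the usual cycle computation that $\bar\eta$ is stationary for $(\bar Z_n)$. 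The measure $\eta$ on $\Sigma$ obtained from $\bar\eta$ by replacing its atom at $\sigma$ with a $\rho$-sample is then the candidate invariant law for $(Z_n)$, and one checks $\int f\,d\eta=\int\bar f\,d\bar\eta$ for every bounded $f$.

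For the strong law I would group the partial sums $\sum_{k=0}^{n-1}\bar f(\bar Z_k)$ over complete cycles: the per-cycle sums $\sum_{n=R_k}^{R_{k+1}-1}\bar f(\bar Z_n)$ are i.i.d. with mean ${\mathbb E}_\sigma[R_1]\int\bar f\,d\bar\eta$, the number of cycles completed by time $n$ grows like $n/{\mathbb E}_\sigma[R_1]$ by the renewal SLLN, and the final incomplete cycle contributes a negligible boundary term; the classical SLLN then gives $n^{-1}\sum_{k<n}\bar f(\bar Z_k)\to\int\bar f\,d\bar\eta$ almost surely. To pass from $(\bar Z_n)$ to $(Z_n)$ I would realize $(Z_n)$ as the projection of $(\bar Z_n)$ in which each visit to $\sigma$ is replaced by an independent $\rho$-draw, as in Lemma~\ref{L4}. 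Off $\{\bar Z_n=\sigma\}$ one has $f(Z_n)=\bar f(\bar Z_n)$, while on $\{\bar Z_n=\sigma\}$ the difference $f(Z_n)-\bar f(\bar Z_n)$ has conditional mean zero and is bounded; its Cesàro averages therefore vanish almost surely by the SLLN for independent mean-zero summands (conditioning on the whole $\bar Z$-trajectory). This yields the stated strong law for $(Z_n)$.

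For total-variation convergence I would use the aperiodicity hypothesis $\gcd\{n:{\mathbb P}_\sigma(\bar Z_n=\sigma)>0\}=1$ to run a coupling of two copies of $(\bar Z_n)$, started from $\bar\xi$ and from $\bar\eta$, declared coupled at their first common visit to $\sigma$; aperiodicity together with recurrence makes this coupling time almost surely finite, and the coupling inequality gives $\|{\mathbb P}_\xi(Z_n\in\cdot)-\eta\|\to 0$. The main obstacle is not the regeneration bookkeeping but the finiteness of the mean return time ${\mathbb E}_\sigma[R_1]$: divergence of $\sum_n{\mathbb P}_\sigma(\bar Z_n=\sigma)$ alone yields only recurrence, whereas \emph{positive} recurrence is what makes $\eta$ a probability measure and turns both limits above into genuine averages. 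I expect this finiteness to come from the minorization (ii) — the uniform lower bound $\epsilon\rho$ forces geometric-type control of the return time to $\sigma$ — and verifying that, together with the integrability of the coupling time needed to close the total-variation argument, is where the real work lies.
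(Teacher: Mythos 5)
Your regeneration scheme is, in outline, exactly the argument behind the sources the paper leans on here: the paper gives no proof of Lemma~\ref{L5} at all, but cites Durrett and Douc--Moulines--Stoffer, and the textbook proof is precisely the atom/cycle/coupling construction you describe. The equivalence of $\sum_n {\mathbb P}_\sigma({\bar Z}_n = \sigma) = \infty$ with ${\mathbb P}_\sigma(R_1 < \infty) = 1$ (where $R_1$ is the first return time of $({\bar Z}_n)$ to $\sigma$), the cycle-occupation formula for the candidate invariant measure, the SLLN over i.i.d.\ cycles, and the coupling at the atom under aperiodicity are all the right ingredients.

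The genuine gap is the step you flagged at the end and proposed to close with the minorization: the assertion that condition (ii) ``forces geometric-type control of the return time to $\sigma$'' is false, and in fact no argument can derive ${\mathbb E}_\sigma[R_1] < \infty$ from the stated hypotheses. The minorization constrains a single step out of $A$; it says nothing about how long the chain takes to come back to $A$. Counterexample: let $(Z_n)$ be the lazy simple random walk on ${\mathbb Z}$ (hold with probability $1/2$, step $\pm 1$ with probability $1/4$ each), and take $A = B = \{0\}$, $\rho = \delta_0$, $\epsilon = 1/2$. Conditions (i) and (ii) hold, ${\mathbb P}_\sigma({\bar Z}_1 = \sigma) = 1/2 > 0$ gives the g.c.d.\ condition, and recurrence of the walk gives $\sum_n {\mathbb P}_\sigma({\bar Z}_n = \sigma) = \infty$; yet this chain is null recurrent, so ${\mathbb E}_\sigma[R_1] = \infty$, there is no stationary probability measure, and the conclusion fails outright: the time average of ${\mathbf 1}_{\{x\}}$ tends to $0$ almost surely for every $x$, while the time average of $f \equiv 1$ is $1$, so no measure $\eta$ on the countable space ${\mathbb Z}$ can satisfy the ergodic statement. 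In other words, Lemma~\ref{L5} as written is missing a positive-recurrence hypothesis (existence of a stationary probability distribution for the chain, equivalently ${\mathbb E}_\sigma[R_1]<\infty$), which is present in the cited textbook theorems; your proof cannot be completed because the statement being proved is not true under the listed assumptions. Where the paper actually invokes the lemma, in the proof of Theorem~\ref{T2}, the missing ingredient is available, but it comes from Assumption~\ref{A2} via Lemma~\ref{L6}: the uniform bound ${\mathbb P}_\xi(Y_n \in S_Y) \ge c$ for all $n \ge N$ and all $\xi$ yields, by a block argument, geometric tails for the return time to $\sigma$ -- it does not come from the Harris minorization alone. Two lesser flaws in the same spirit: the ``for any $\xi$'' clauses require ${\mathbb P}_x(\text{the atom is eventually reached}) = 1$ for every $x$, which is strictly stronger than condition (i); and your transfer of the SLLN from $({\bar Z}_n)$ to $(Z_n)$ via conditionally centered differences is not sound as stated (conditioning on the whole ${\bar Z}$-trajectory biases the $\rho$-samples, since those samples determine ${\bar Z}_{n+1}$), though this is repaired by applying the cycle SLLN directly to the $Z$-cycles between regenerations.
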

Proof of Lemma~\ref{L5} can be found in Chapter~5 of~\cite{Durrett} and 
Chapter~7 of~\cite{Douc}.

\subsubsection{Ergodicity of the extended process}\label{sec:EE}

In Theorem~\ref{T2} below, ergodicity of the extended process 
$(Y_n)_{n\ge 0}$ is proved. Before proceeding, three preliminary results, 
Lemmas~\ref{L6aa}-~\ref{L6} below, are required. Define 
\begin{equation}\label{tau}
\tau = \inf\{n\ge 0\,:\, \pi_2(Y_n) = 0\}.
\end{equation}
From Lemma~\ref{L3}, $\tau$ can be thought of as the first 
time $n$ at which the law of $\pi(Y_n)$ synchronizes 
with that of $X_n$. 
\begin{lemma}\label{L6aa}
For any probability measure $\xi$ on 
$(\Omega_Y, {\mathcal F}_Y)$ and any $t \ge T_{corr}+1$, 
\begin{equation*}
{\mathbb P}_\xi(\tau \le t) \ge \delta.
\end{equation*}
\end{lemma}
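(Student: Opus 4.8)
The plan is to reduce to point-mass initial conditions and then perform a case analysis on the initial counter value $t_0 := \pi_2(Y_0)$. Since $\mathbb{P}_\xi(\tau \le t) = \int \mathbb{P}_{(y,t_0)}(\tau \le t)\,\xi(dy\,dt_0)$, it suffices to establish the bound $\mathbb{P}_{(y,t_0)}(\tau \le t) \ge \delta$ for an arbitrary deterministic start $Y_0 = (y,t_0)$ with $y \in S$ for some $S \in \mathcal{S}$. Recall from \eqref{tau} that $\tau$ is the first time the counter $\pi_2(Y_n)$ returns to $0$, which by construction of $(Y_n)_{n\ge 0}$ is exactly the first time the first component exits the set $S$ it currently occupies. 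The single mechanism that yields a uniform lower bound is the quasistationary resampling built into the transition from counter $T_{corr}-1$ to counter $T_{corr}$: there one draws $z\sim\nu$ and steps once, so the probability of exiting $S$ (and resetting the counter) is exactly $\mathbb{P}_\nu(X_1\notin S)\ge\delta$ by Assumption~\ref{A4}.

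The easy cases are $0\le t_0\le T_{corr}-1$. If $t_0=0$ then $\tau=0\le t$ with probability one. If $1\le t_0\le T_{corr}-1$, set $m:=T_{corr}-t_0\le T_{corr}-1$ and let $B$ be the event that the first component stays in $S$ through steps $1,\dots,m-1$, so that the counter reaches $T_{corr}-1$ at time $m-1$. On $B^c$ the process has already exited $S$, hence $\tau\le m-1<t$; on $B$ the step at time $m$ is the quasistationary-resampling step, which resets the counter with probability at least $\delta$. Writing $\mathbb{P}_{(y,t_0)}(\tau\le t)\ge \mathbb{P}(B^c)+\delta\,\mathbb{P}(B)$ and using $\delta\le 1$ gives the desired bound $\ge\delta$; note the reset occurs no later than time $m\le T_{corr}-1\le t$, so the budget $t\ge T_{corr}+1$ is more than sufficient.

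The main obstacle is the remaining case $t_0=T_{corr}$, because at counter $T_{corr}$ the first component simply follows the unmodified dynamics (step 3), and for a worst-case point $y$ deep inside $S$ the one-step exit probability $\mathbb{P}_y(X_1\notin S)$ carries no lower bound. The resolution is the quasistationary property itself: the counter value $T_{corr}$ can only be occupied as the outcome of the resampling step followed by conditioning on remaining in $S$, and by Definition~\ref{D1} (applied with $n=1$) this conditioning returns the law $\nu$ --- that is, whenever $\pi_2(Y_n)=T_{corr}$ the first component is distributed according to $\nu$, a property preserved under each subsequent step-3 transition conditioned on survival. Consequently the one-step exit probability at counter $T_{corr}$ equals $\mathbb{P}_\nu(X_1\notin S)\ge\delta$, giving $\mathbb{P}(\tau\le 1)\ge\delta$ and hence $\mathbb{P}_{(y,T_{corr})}(\tau\le t)\ge\delta$. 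I expect the delicate point to be making this last reduction rigorous for genuinely arbitrary $\xi$: one must either restrict to initial laws reachable by the process (for which the $\nu$-at-$T_{corr}$ invariance holds by the construction together with Idealization~\ref{A1}) or adopt the convention that a state at counter $T_{corr}$ carries a $\nu$-distributed first component. Combining the three cases and integrating back over $\xi$ then yields $\mathbb{P}_\xi(\tau\le t)\ge\delta$ for every $t\ge T_{corr}+1$.
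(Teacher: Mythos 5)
Your argument is correct on the cases it settles, and it reaches the bound by the same mechanism as the paper but with a different organization. The paper does not split on the initial counter value: it introduces $\kappa=\inf\{n\ge 0:\pi_2(Y_n)=T_{corr}\}$, writes ${\mathbb P}_\xi(\tau\le t)\ge {\mathbb P}_\xi(\tau\le\kappa)+{\mathbb P}_\xi(\tau=\kappa+1\mid\tau>\kappa){\mathbb P}_\xi(\tau>\kappa)$ (its display~\eqref{ab1}), and then argues that, conditionally on surviving to time $\kappa$, the first component is $\nu$-distributed (this uses Definition~\ref{D1}, i.e.\ that the resampling step followed by conditioning on staying in $S$ returns $\nu$), so the next step exits with probability ${\mathbb P}_\nu(X_1\notin S)\ge\delta$; the closing algebra $\delta+(1-\delta){\mathbb P}_\xi(\tau\le\kappa)\ge\delta$ is the same as your ${\mathbb P}(B^c)+\delta\,{\mathbb P}(B)\ge\delta$. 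Your case analysis on $t_0$ is in one respect more elementary: for $0\le t_0\le T_{corr}-1$ you harvest the factor $\delta$ from the resampling transition itself (which draws $z\sim\nu$ afresh, independently of the history, and exits with probability exactly ${\mathbb P}_\nu(X_1\notin S)$), so Assumption~\ref{A4} alone suffices and Definition~\ref{D1} is not needed in that case; the paper's version needs the QSD invariance one step later, at counter $T_{corr}$.

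The ``delicate point'' you flag for $t_0=T_{corr}$ is genuine, and it is worth saying plainly that the paper's own proof has the identical unacknowledged hole: in the paper's display~\eqref{ab2}, the $k=0$ term of the sum occurs precisely when the initial counter is $T_{corr}$ (so $\kappa=0$), and there the asserted equality ${\mathbb P}_{(x,T_{corr})}(\tau=1\mid\kappa=0,\tau>0)={\mathbb P}_\nu(X_1\notin S)$ is false in general --- the correct value is ${\mathbb P}_x(X_1\notin S)$, and nothing in Assumptions~\ref{A2}--\ref{A4} bounds this below (Assumption~\ref{A3} is a within-$S$ Doeblin bound, Assumption~\ref{A4} concerns only the QSD; for $x$ deep inside $S$ the exit probability within $T_{corr}+1$ steps can even be zero). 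So for truly arbitrary $\xi$ on $(\Omega_Y,{\mathcal F}_Y)$ the lemma requires exactly the restriction or convention you propose: either $\xi$ gives no mass to $\Omega\times\{T_{corr}\}$, or on that slice its first marginal is $\nu$ --- conditions that hold automatically for the laws the algorithm actually produces, since the extended process is started with counter $0$ (Lemma~\ref{L3a} and~\eqref{xiy}), and the $\nu$-at-$T_{corr}$ property is then preserved by Definition~\ref{D1}. In short: your proposal is as complete as the paper's proof, isolates the same key step, and is more candid about where uniformity in $\xi$ can and cannot come from.
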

\begin{proof}
Let $t \ge T_{corr}+1$ and define
\begin{equation*}
\kappa = \inf\{n\ge 0\,:\, \pi_2(Y_n) = T_{corr}\}.
\end{equation*}
Note that if $\tau> \kappa$, then $\kappa \le T_{corr}$ and so $\kappa + 1 \le t$. 
On the other hand, if $\tau \le \kappa$, then $\tau \le T_{corr}$ and 
so $\tau \le t-1 < t$. Thus, 
\begin{align}\begin{split}\label{ab1}
{\mathbb P}_\xi(\tau \le t) &= 
{\mathbb P}_\xi(\tau \le t\,|\, \tau \le \kappa){\mathbb P}_\xi(\tau \le \kappa) + {\mathbb P}_\xi(\tau \le t\,|\, \tau > \kappa){\mathbb P}_\xi(\tau > \kappa)\\
&={\mathbb P}_\xi(\tau \le \kappa)+ {\mathbb P}_\xi(\tau \le t\,|\, \tau > \kappa){\mathbb P}_\xi(\tau > \kappa)\\
&\ge {\mathbb P}_\xi(\tau \le \kappa)+ {\mathbb P}_\xi(\tau = \kappa + 1\,|\, \tau > \kappa){\mathbb P}_\xi(\tau > \kappa).
\end{split}
\end{align}
By Assumption~\ref{A3}, for any $(x,t) \in \Omega_Y$, 
\begin{align}\begin{split}\label{ab2}
 {\mathbb P}_{(x,t)}(\tau = \kappa+1\,|\,\tau > \kappa) 
&= \sum_{k = 0}^{T_{corr}}{\mathbb P}_{(x,t)}(\tau = k+1\,|\,\kappa = k,\,\tau > k)
{\mathbb P}(\kappa = k\,|\, \tau > \kappa)\\
&= \sum_{k = 0}^{T_{corr}}{\mathbb P}_\nu(X_1 \notin S){\mathbb P}(\kappa = k\,|\, \tau > \kappa)\\
&= {\mathbb P}_\nu(X_1 \notin S) \ge \delta,
\end{split}
\end{align}
where $\nu$ is the QSD in $S$, with $S \ni x$. Combining~\eqref{ab1} 
and~\eqref{ab2} and using the fact that $\delta \in (0,1]$, 
\begin{align*}
{\mathbb P}_\xi(\tau \le t) &\ge {\mathbb P}_\xi(\tau \le \kappa) 
+ \delta {\mathbb P}_\xi(\tau > \kappa) \\
&= \delta + (1-\delta){\mathbb P}_\xi(\tau \le \kappa) \\
&\ge \delta.
\end{align*}

\end{proof}

For the remainder of 
Section~\ref{sec:EE}, fix $S \in {\mathcal S}$
satisfying Assumption~\ref{A3}, and define 
\begin{equation}\label{S_Y}
S_Y = S \times \{T_{corr}\}.
\end{equation}
\begin{lemma}\label{L6a}
Let $\xi$ be any probability measure on 
$(\Omega_Y,{\mathcal F}_Y)$ with support 
in $S \times \{0,\ldots,T_{corr}\}$. Then for all $n \ge T_{corr}$,
\begin{equation*}
{\mathbb P}_\xi(Y_{n} \in S_Y) \ge \alpha^{n}.
\end{equation*}
\end{lemma}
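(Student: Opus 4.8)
The plan is to exploit the Doeblin-type bound in Assumption~\ref{A3} to show that, as long as the first component of $(Y_n)_{n\ge 0}$ remains in $S$, each step survives in $S$ with probability at least $\alpha$; and to observe that staying in $S$ for $n \ge T_{corr}$ steps forces the counter up to its cap $T_{corr}$, which places $Y_n$ in $S_Y$.

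First I would record the one-step survival bound. Taking $C = S$ in Assumption~\ref{A3} and using that $\lambda$ is supported in $S$ (so $\lambda(S) = 1$), I obtain $\mathbb{P}_x(X_1 \in S) \ge \alpha$ for every $x \in S$. I then check that this bound propagates through each of the three cases in the definition of $(Y_n)_{n\ge 0}$: whenever $\pi_1(Y_n) \in S$, the next first component lies in $S$ with probability at least $\alpha$. Cases 1 and 3 are immediate, since there the first component is advanced by one step of $\mathbb{P}_y(X_1 \in \cdot)$ with $y \in S$. In Case 2 (counter equal to $T_{corr}-1$), the next point is drawn from $\mathbb{P}_z(X_1 \in \cdot)$ with $z \sim \nu$; since $\nu$ is supported in $S$, integrating the bound $\mathbb{P}_z(X_1 \in S) \ge \alpha$ against $\nu$ again yields survival probability at least $\alpha$.

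Next I would track the counter. By construction, each step in which the first component stays in $S$ increments the counter by one until it reaches $T_{corr}$, after which it remains pinned at $T_{corr}$ for as long as the first component stays in $S$. Since $\xi$ is supported in $S \times \{0,\ldots,T_{corr}\}$, the counter starts at some value in $\{0,\ldots,T_{corr}\}$, so at most $T_{corr}$ consecutive steps in $S$ bring it to $T_{corr}$. Hence for every $n \ge T_{corr}$ the event $E_n := \{\pi_1(Y_k) \in S \text{ for all } 1 \le k \le n\}$ is contained in $\{Y_n \in S_Y\}$.

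Finally I would bound $\mathbb{P}_\xi(E_n)$ from below by induction using the Markov property of $(Y_n)_{n\ge 0}$. Conditioning on $Y_{n-1}$ and applying the one-step survival bound on the event $\{\pi_1(Y_{n-1}) \in S\}$ gives $\mathbb{P}_\xi(E_n) \ge \alpha\,\mathbb{P}_\xi(E_{n-1})$; since $\pi_1(Y_0) \in S$ almost surely under $\xi$, iterating yields $\mathbb{P}_\xi(E_n) \ge \alpha^n$, and the inclusion $E_n \subset \{Y_n \in S_Y\}$ completes the proof. The main point to be careful about is the uniformity of the $\alpha$ bound across the three regimes of the dynamics, especially the QSD restart in Case 2, and verifying that $n \ge T_{corr}$ really suffices for the counter to have saturated regardless of its initial value.
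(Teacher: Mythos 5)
Your proof is correct and follows essentially the same route as the paper's: both establish the uniform one-step survival bound ${\mathbb P}_{(x,t)}\left(\pi_1(Y_1)\in S\right)\ge\alpha$ across the three regimes of the dynamics (including the QSD restart when the counter equals $T_{corr}-1$), note that remaining in $S$ for $n\ge T_{corr}$ steps forces the counter to saturate at $T_{corr}$ so that the path event lands in $S_Y$, and then multiply the one-step bounds along the path via the Markov property. The only difference is presentational: the paper writes the final step as a telescoping product of conditional probabilities over explicitly tracked counter values, whereas you run an induction on the event $E_n$ of staying in $S$; these are the same argument.
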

\begin{proof}
By Assumption~\ref{A3}, ${\mathbb P}_x(X_1 \in S) \ge \alpha$ 
whenever $x \in S$. By definition of the extended
process $(Y_n)_{n\ge 0}$, the following holds. 
First, for any $x \in S$ 
and any $t \in \{0,\ldots,T_{corr}-2\}$,  
\begin{equation}\label{es1}
{\mathbb P}_{(x,t)}(Y_1 \in S \times \{t+1\}) = {\mathbb P}_x(X_1 \in S) 
\ge \alpha,
\end{equation}
Second, for any $x \in S$, 
\begin{equation}\label{es2}
{\mathbb P}_{(x,T_{corr}-1)}(Y_1 \in S_Y)
= \int_S {\mathbb P}_y(X_1 \in S)\,\nu(dy) \ge \int_S \alpha\,\nu(dy) = \alpha 
\end{equation}
Third, for any $x \in S$, 
\begin{equation}\label{es3}
{\mathbb P}_{(x,T_{corr})}(Y_1 \in S_Y) = {\mathbb P}_x(X_1 \in S) \ge \alpha.
\end{equation}
Let $n \ge T_{corr}$. For 
any $x \in S$ and $t \in \{0,\ldots,T_{corr}\}$, due 
to~\eqref{es1},~\eqref{es2} and~\eqref{es3},
\begin{align*}
\alpha^n &\le 
\prod_{j=1}^{T_{corr}-t} 
{\mathbb P}_{(x,t)}\left(Y_j \in S \times\{t+j\}\,|\,Y_{j-1} \in S \times\{t+j-1\}\right)\\
&\quad\times \prod_{j=T_{corr}-t+1}^n {\mathbb P}_{(x,t)}\left(Y_j \in S_Y\,|\, Y_{j-1} \in S_Y\right)
\\&= {\mathbb P}_{(x,t)}\left(\bigcap_{j=1}^{T_{corr}-t} \{Y_j \in S \times\{t+j\}\},\, 
\bigcap_{j = T_{corr}-t+1}^n \{Y_j \in S_Y\}\right)\\
&\le  {\mathbb P}_{(x,t)}(Y_n \in S_Y),
\end{align*}
where by convention the product and intersection from 
$j= T_{corr}-t+1$ to $j = n$ do not appear above 
if $n = T_{corr}$ and $t = 0$.
\end{proof}

Lemmas~\ref{L6aa}-~\ref{L6a} lead to the following.
\begin{lemma}\label{L6}
There exists $N \ge 0$ and $c>0$ 
such that for all 
probability measures $\xi$ on $(\Omega_Y, {\mathcal F}_Y)$ 
and all $n \ge N$,
\begin{equation*}
{\mathbb P}_{\xi}(Y_n \in S_Y) \ge c.
\end{equation*}
\end{lemma}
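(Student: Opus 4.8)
The plan is to reach the target set $S_Y = S\times\{T_{corr}\}$ in three stages, chaining together the three preliminary results. Lemma~\ref{L6a} already controls the process once its first component sits in $S$, but only for starting measures supported in $S\times\{0,\ldots,T_{corr}\}$ and only with a bound $\alpha^n$ that decays in $n$; the missing ingredient for a bound uniform in both $\xi$ and $n$ is a guarantee that the first component keeps returning to $S$ no matter where the process starts. That guarantee is supplied by the synchronization of Lemma~\ref{L3} together with the uniform ergodicity of Assumption~\ref{A2}, after a preliminary reset of the counter via Lemma~\ref{L6aa}.

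First I would record, from Lemma~\ref{L6aa} with $t=T_{corr}+1$, that the stopping time $\tau$ of~\eqref{tau} satisfies ${\mathbb P}_\xi(\tau\le T_{corr}+1)\ge \delta$ for every $\xi$. Since $\{\pi_2(Y_n)=0\}$ is measurable with respect to $\sigma(Y_0,\ldots,Y_n)$, the time $\tau$ is a genuine stopping time and the strong Markov property applies at $\tau$; moreover, once the counter has reset it remains confined to $\{0,\ldots,T_{corr}\}$.

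The crux is the second stage. Conditioning on $\{\tau=j,\ Y_\tau=(z,0)\}$ and applying the strong Markov property, the shifted process $(Y_{\tau+k})_{k\ge 0}$ is an extended chain started from $(z,0)$, so Lemma~\ref{L3} gives ${\mathbb P}(\pi_1(Y_{\tau+k})\in S\mid \tau=j,\ Y_\tau=(z,0))={\mathbb P}_z(X_k\in S)$. By Assumption~\ref{A2} there is $k_0$ with $\sup_\xi||{\mathbb P}_\xi(X_m\in\cdot)-\mu||\le \mu(S)/2$ for all $m\ge k_0$, whence ${\mathbb P}_z(X_m\in S)\ge \mu(S)/2$ for every $z$ and every $m\ge k_0$ (recall $\mu(S)>0$ from Assumption~\ref{A3}). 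Integrating over $z$ and summing over $j\le T_{corr}+1$, and using $n-j\ge n-(T_{corr}+1)\ge k_0$, I would obtain for every $n\ge (T_{corr}+1)+k_0$
\[
{\mathbb P}_\xi(\pi_1(Y_n)\in S)\ \ge\ {\mathbb P}_\xi\big(\pi_1(Y_n)\in S,\ \tau\le T_{corr}+1\big)\ \ge\ \frac{\mu(S)}{2}\,{\mathbb P}_\xi(\tau\le T_{corr}+1)\ \ge\ \frac{\delta\,\mu(S)}{2},
\]
uniformly in $\xi$.

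Finally, on the event $\{\pi_1(Y_m)\in S,\ \tau\le T_{corr}+1\}$ with $m=n-T_{corr}\ge \tau$ the counter has already reset and hence lies in $\{0,\ldots,T_{corr}\}$, so $Y_m\in S\times\{0,\ldots,T_{corr}\}$; conditioning on $Y_m$ via the Markov property and invoking Lemma~\ref{L6a} (with its $n$ taken to be $T_{corr}$) gives ${\mathbb P}_{Y_m}(Y_{T_{corr}}\in S_Y)\ge\alpha^{T_{corr}}$ there. Combining with the previous display, for every $n\ge N:=2T_{corr}+1+k_0$,
\[
{\mathbb P}_\xi(Y_n\in S_Y)\ \ge\ \alpha^{T_{corr}}\,{\mathbb P}_\xi\big(\pi_1(Y_{n-T_{corr}})\in S,\ \tau\le T_{corr}+1\big)\ \ge\ \alpha^{T_{corr}}\,\frac{\delta\,\mu(S)}{2}\ =:\ c\ >\ 0,
\]
which is the assertion. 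I expect the only delicate point to be the second stage: one must pass to the stopping time $\tau$ so that Lemma~\ref{L3} may legitimately replace the law of $\pi_1(Y)$ by that of the genuine Markov chain $(X_n)_{n\ge 0}$, after which uniform ergodicity provides the $\xi$-independent lower bound for returning to $S$. The first and third stages are then direct applications of Lemmas~\ref{L6aa} and~\ref{L6a}.
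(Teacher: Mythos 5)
Your proof is correct and follows essentially the same route as the paper's: first Lemma~\ref{L6aa} to reset the counter with probability at least $\delta$ within $T_{corr}+1$ steps, then conditioning at the stopping time $\tau$ so that Lemma~\ref{L3} and Assumption~\ref{A2} give a $\xi$-independent lower bound (your $\mu(S)/2$, the paper's $c'$) for $\pi_1(Y_{n-T_{corr}})\in S$, and finally Lemma~\ref{L6a} over the last $T_{corr}$ steps to pick up the factor $\alpha^{T_{corr}}$, yielding the same constant $c=\alpha^{T_{corr}}c'\delta$ and threshold. The only (cosmetic) difference is that you carry the event $\{\tau\le T_{corr}+1\}$ explicitly into the final stage to justify that the counter lies in $\{0,\ldots,T_{corr}\}$, whereas the paper builds this into the conditional measures $\xi_j$ and $\phi_n$.
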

\begin{proof}
Fix a probability measure $\xi$ 
on $(\Omega_Y, {\mathcal F}_Y)$. Since $\mu(S)>0$, 
by Assumption~\ref{A2} one may choose $N'' \ge 0$ 
and $c'>0$ such that for all probability 
measures $\zeta$ on $(\Omega, {\mathcal F})$ 
and all $n \ge N''$, 
\begin{equation}\label{unifbound}
{\mathbb P}_{\zeta}(X_n \in S) \ge c'.
\end{equation}
Let $N' = N'' + T_{corr} + 1$ and define 
$\tau$ as in~\eqref{tau}. 
For $j \ge 0$, define 
probability measures $\xi_j$ on $(\Omega, {\mathcal F})$ 
by, for $A \in {\mathcal F}$, 
\begin{equation*}
\xi_j(A) = {\mathbb P}_\xi(\pi_1(Y_j) \in A\,|\, \tau = j).
\end{equation*}
By Lemma~\ref{L3} and~\eqref{unifbound}, for all $j \in \{0,\ldots,T_{corr}+1\}$ 
and $n \ge N'$,
\begin{align*}
{\mathbb P}_\xi(\pi_1(Y_n) \in S\,|\, \tau = j) 
&= \int_\Omega {\mathbb P}_\xi(\pi_1(Y_n) \in S\,|\, \pi_1(Y_j) = x,\,\tau = j) \,
\xi_j(dx)\\
&= \int_\Omega {\mathbb P}_{(x,0)}(\pi_1(Y_{n-j}) \in S)\,\xi_j(dx)\\
&= \int_\Omega {\mathbb P}_{x}(X_{n-j} \in S)\,\xi_j(dx)\\
&= {\mathbb P}_{\xi_j}(X_{n-j} \in S)
 \ge c'.
\end{align*}
So by Lemma~\ref{L6aa}, for all $n \ge N'$, 
\begin{align}\begin{split}\label{est1}
{\mathbb P}_\xi(\pi_1(Y_n) \in S) 
&\ge \sum_{j=0}^{T_{corr}+1} {\mathbb P}_\xi(\pi_1(Y_n) \in S\,|\,\tau = j)\,{\mathbb P}_\xi(\tau = j)\\
&\ge c'{\mathbb P}_\xi(\tau \le T_{corr}+1) \\
&\ge c'\delta.\end{split}
\end{align}
Let $N = N' + T_{corr}$ and fix $n \ge N$.
Define a probability 
measure $\phi_n$ on $(\Omega_Y,{\mathcal F}_Y)$ with 
support in $S \times\{0,\ldots,T_{corr}\}$ 
by, for $A \in {\mathcal F}$ and $t \in \{0,\ldots,T_{corr}\}$, 
\begin{equation*}
\phi_n(A,t) = {\mathbb P}_{\xi}(Y_{n-T_{corr}} \in A \times\{t\}\,|\,\pi_1(Y_{n-T_{corr}}) \in S)
\end{equation*}
By Lemma~\ref{L6a} and~\eqref{est1}, 
\begin{align*}
{\mathbb P}_\xi(Y_n \in S_Y) &\ge 
{\mathbb P}_\xi(Y_n \in S_Y\,|\, \pi_1(Y_{n-T_{corr}}) \in S)\,
{\mathbb P}_\xi(\pi_1(Y_{n-T_{corr}}) \in S)\\
&= {\mathbb P}_{\phi_n}(Y_{T_{corr}} \in S_Y)\,{\mathbb P}_\xi(\pi_1(Y_{n-T_{corr}}) \in S)\\
&\ge {\mathbb P}_{\phi_n}(Y_{T_{corr}} \in S_Y)\, c'\delta \\
&\ge \alpha^{T_{corr}} c'\delta.
\end{align*}
Taking $c = \alpha^{T_{corr}} c'\delta$ completes the proof.
\end{proof}

Finally ergodicity of 
the extended process $(Y_n)_{n\ge 0}$ 
can be proved, using the tools of Section~\ref{Sec:Harris}.
\begin{theorem}\label{T2}
There exists a (unique) measure $\mu_Y$ on 
$(\Omega_Y, {\mathcal F}_Y)$ 
such that for any probability measure $\xi$ on 
$(\Omega_Y, {\mathcal F}_Y)$ and any 
bounded measurable function 
$f:\Omega_Y \to {\mathbb R}$, 
\begin{equation*}
{\mathbb P}_\xi\left(\lim_{n\to \infty}\frac{f(Y_0) + \ldots + f(Y_{n-1})}{n} = \int_\Omega f \,d\mu_Y \right) = 1.
\end{equation*}
Moreover, for any probability measure $\xi$ on 
$(\Omega_Y, {\mathcal F}_Y)$,
\begin{equation*}
\lim_{n\to \infty} ||{\mathbb P}_\xi(Y_n \in \cdot) - \mu_Y|| = 0.
\end{equation*}
\end{theorem}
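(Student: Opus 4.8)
The plan is to recognize $(Y_n)_{n\ge 0}$ as a Harris chain on $(\Omega_Y,{\mathcal F}_Y)$ and then invoke the ergodic theorem for Harris chains, Lemma~\ref{L5}. First I would exhibit the Harris structure by taking $A = B = S_Y$ (the set from~\eqref{S_Y}), $\epsilon = \alpha$, and $\rho$ the lift of $\lambda$ to $S_Y$, namely $\rho(C \times \{T_{corr}\}) = \lambda(C)$ for $C \subset S$. Condition~(ii) of the Harris definition is then immediate from Assumption~\ref{A3}: for $(y,T_{corr}) \in S_Y$ and $C \subset S$, the transition rule~3 for the extended process gives ${\mathbb P}_{(y,T_{corr})}(Y_1 \in C \times \{T_{corr}\}) = {\mathbb P}_y(X_1 \in C) \ge \alpha\lambda(C) = \epsilon\,\rho(C \times\{T_{corr}\})$. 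Condition~(i) --- that $S_Y$ is reached from every starting point with positive probability --- follows at once from Lemma~\ref{L6}, which gives ${\mathbb P}_x(Y_N \in S_Y) \ge c > 0$ for every $x \in \Omega_Y$.

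Next, with the auxiliary chain $({\bar Z}_n)_{n\ge 0}$ and its extra state $\sigma$ built as in Section~\ref{Sec:Harris}, I would verify the two hypotheses of Lemma~\ref{L5}. The crucial simplification is that here $A = B = S_Y$, so the $\rho$-sample drawn upon leaving $\sigma$ already lands in $A$; consequently the $\epsilon$-branch is immediately available after visiting $\sigma$, and in particular ${\mathbb P}_\sigma({\bar Z}_1 = \sigma) = \epsilon = \alpha > 0$. This self-loop places $1$ inside $\{n \ge 0 : {\mathbb P}_\sigma({\bar Z}_n = \sigma) > 0\}$, so the g.c.d.\ condition of Lemma~\ref{L5} holds trivially.

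For the summability condition I would argue as follows. Because $A = B = S_Y$, the event $\{{\bar Z}_n = \sigma\}$ occurs exactly when, at the preceding step, the chain sits in $S_Y$ or at $\sigma$ and then selects the $\epsilon$-branch; hence
\[
{\mathbb P}_\sigma({\bar Z}_n = \sigma) = \alpha\left[{\mathbb P}_\sigma({\bar Z}_{n-1} \in S_Y) + {\mathbb P}_\sigma({\bar Z}_{n-1} = \sigma)\right].
\]
Applying Lemma~\ref{L4} with $f = 1_{S_Y}$ (noting ${\bar f} = 1_{S_Y \cup \{\sigma\}}$ since $\rho$ is supported in $B = S_Y$), or equivalently projecting $\sigma$ to an independent $\rho$-sample so that the projected process has the law of $(Y_n)_{n\ge 0}$ started from $\rho$, the bracketed quantity equals ${\mathbb P}_\rho(Y_{n-1} \in S_Y)$. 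By Lemma~\ref{L6} this is at least $c$ once $n - 1 \ge N$, so ${\mathbb P}_\sigma({\bar Z}_n = \sigma) \ge \alpha c$ for all $n \ge N+1$ and the series $\sum_n {\mathbb P}_\sigma({\bar Z}_n = \sigma)$ diverges. With both hypotheses in hand, Lemma~\ref{L5} produces a unique invariant measure, which I would name $\mu_Y$, together with the strong law and the total-variation convergence asserted in the statement.

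The step I expect to require the most care is the summability verification: one must track the role of $\sigma$ correctly --- it behaves like a $\rho$-distributed point of $A = B$, which is precisely what yields both the self-loop and the recursion above --- and the passage from the auxiliary chain back to $(Y_n)_{n\ge 0}$ through the correspondence of Lemma~\ref{L4} must be set up so that the uniform-in-initial-distribution bound of Lemma~\ref{L6} can be applied at the intermediate time $n-1$. Everything else is a routine matching of the extended-process dynamics against the Harris-chain template of Section~\ref{Sec:Harris}.
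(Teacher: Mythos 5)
Your proposal is correct and follows essentially the same route as the paper's proof: the identical Harris-chain structure (take $A = B = S_Y$, $\epsilon = \alpha$, and $\rho$ the lift of $\lambda$ to $S_Y$), Lemma~\ref{L6} for condition (i) and for recurrence, the self-loop ${\mathbb P}_\sigma({\bar Y}_1 = \sigma) = \alpha$ for aperiodicity, and Lemma~\ref{L5} to conclude. The only variation is in the summability step: you use the exact renewal identity ${\mathbb P}_\sigma({\bar Y}_n = \sigma) = \alpha\,{\mathbb P}_\sigma({\bar Y}_{n-1} \in S_Y \cup \{\sigma\})$ together with the observation that starting at $\sigma$ is equivalent to starting from $\rho$, yielding the bound $\alpha c$ for $n \ge N+1$, whereas the paper conditions on $\{{\bar Y}_1 \ne \sigma\}$ and obtains the slightly weaker (but equally sufficient) bound $\alpha(1-\alpha)c$ for $n \ge N+2$ -- an immaterial difference.
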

\begin{proof}
First, it is claimed $(Y_n)_{n\ge 0}$ is a Harris chain. 
Recall that $S$ and $S_Y$ are defined as in~\eqref{S_Y}.
Lemma~\ref{L6} shows that for any $(x,t) \in \Omega_Y$, 
\begin{equation}\label{cond1}
{\mathbb P}_{(x,t)}\left(\,\inf\{n\ge 0\,:\, Y_n \in S_Y\} < \infty\right) > 0.
\end{equation}
Define a probability measure $\rho$ on 
$(\Omega_Y, {\mathcal F}_Y)$ with support in $S_Y$ 
by: for $A \in {\mathcal F}$ and $t \in \{0,\ldots,T_{corr}\}$, 
\begin{equation*}
\rho(A,t) = \begin{cases} \lambda(A),& t = T_{corr}\\
0,& \hbox{else} \end{cases}
\end{equation*}
Let $C \in {\mathcal F}_Y$ with $C \subset S_Y$. 
Then $C = A \times\{T_{corr}\}$ with $A \in {\mathcal F}$, 
$A \subset S$. 
From Assumption~\ref{A3}, for any $(x,t) \in S_Y$, 
\begin{equation}\label{cond2}
{\mathbb P}_{(x,t)}(Y_1 \in C) = {\mathbb P}_x(X_1 \in A) 
\ge \alpha\lambda(A) = \alpha \rho(C).
\end{equation}
One can check $(Y_n)_{n\ge 0}$ is a 
Harris chain by taking $A = B = S_Y$, $\epsilon = \alpha$, 
and $\rho$ as above in the definition of Harris chains
in Section~\ref{Sec:Harris}.

Next it is proved that $(Y_n)_{n\ge 0}$ is ergodic. Let 
$({\bar Y}_n)_{n\ge 0}$ be the auxiliary chain 
defined as in Section~\ref{Sec:Harris}. Note that 
\begin{equation*}
{\mathbb P}_{\sigma}({\bar Y}_1 = \sigma) = \alpha.
\end{equation*}
This shows the second assumption of Lemma~\ref{L5} holds, 
that is,  
\begin{equation*}
\textup{g.c.d.}\{n\ge 0\,:\, {\mathbb P}_\sigma({\bar Y}_n = \sigma) > 0\} = 1,
\end{equation*}
since $1$ is in the set. 
Consider now the first assumption. It must be shown that
\begin{equation}\label{recurrent}
\sum_{n=1}^\infty {\mathbb P}_\sigma({\bar Y}_n = \sigma) = \infty.
\end{equation}
By Lemma~\ref{L6}, one can choose $N\ge 0$ and $c>0$ such 
for all probability measures $\xi$ on $(\Omega_Y,{\mathcal F}_Y)$ 
and all $n \ge N$, 
\begin{equation}\label{BD}
{\mathbb P}_\xi(Y_n \in S_Y) \ge c.
\end{equation}
Define a probability measure $\bar \xi$ on 
$({\bar \Omega}_Y, {\bar {\mathcal F}}_Y)$ by
\begin{equation*}
 {\bar \xi}(A) = {\mathbb P}_\sigma({\bar Y}_1 
\in A\,|\, {\bar Y}_1 \ne \sigma) \hbox{ for }A \in {\mathcal F}_Y,\quad 
{\bar \xi}(\sigma) = 0,
\end{equation*}
and let $\xi$ be the probability 
measure on $(\Omega_Y, {\mathcal F}_Y)$ which 
is the restriction of $\bar \xi$ to $\Omega_Y$. 
By~\eqref{BD} and Lemma~\ref{L4} with $f = 1_{S_Y}$, 
for all $n\ge N$, 
\begin{align}\begin{split}\label{last}
c &\le {\mathbb P}_\xi(Y_n \in S_Y)\\
&= {\mathbb P}_{\bar \xi}({\bar Y}_n \in S_Y) 
+ {\mathbb P}_{\bar \xi}({\bar Y}_n = \sigma)\\
&= {\mathbb P}_{\bar \xi}({\bar Y}_n \in S_Y \cup \{\sigma\}).\end{split}
\end{align}
Using~\eqref{last}, for $n \ge N+1$, 
\begin{align}\begin{split}\label{BD2}
{\mathbb P}_\sigma({\bar Y}_n \in S_Y \cup \{\sigma\}) 
&\ge {\mathbb P}_{\sigma}({\bar Y}_n \in S_Y \cup \{\sigma\}\,|\, {\bar Y}_1 \ne \sigma){\mathbb P}_\sigma(Y_1 \ne \sigma)
\\&= (1-\alpha){\mathbb P}_{\sigma}({\bar Y}_n \in S_Y \cup \{\sigma\}\,|\, {\bar Y}_1 \ne \sigma)\\
&= (1-\alpha){\mathbb P}_{\bar \xi}({\bar Y}_{n-1} \in S_Y \cup \{\sigma\})\\
&\ge (1-\alpha)c.\end{split}
\end{align}
Now by~\eqref{BD2}, for $n \ge N+2$, 
\begin{align*}
{\mathbb P}_\sigma({\bar Y}_n = \sigma) 
&\ge {\mathbb P}_\sigma({\bar Y}_n = \sigma \,|\, {\bar Y}_{n-1} \in S_Y \cup \{\sigma\})
{\mathbb P}_\sigma({\bar Y}_{n-1} \in S_Y \cup \{\sigma\})\\
&\ge {\mathbb P}_\sigma({\bar Y}_n = \sigma \,|\, {\bar Y}_{n-1} \in S_Y \cup \{\sigma\})
(1-\alpha)c\\
&= \alpha(1-\alpha)c > 0.
\end{align*}
Thus~\eqref{recurrent} holds. The result 
now follows from Lemma~\ref{L5}.
\end{proof}

\subsubsection{Ergodicity of the ParRep process with one replica}\label{sec:EP}
Next, ergodicity of $({\tilde X}_n)_{n\ge 0}$, the 
ParRep process with one replica, is proved.
\begin{theorem}\label{T3}
 For all probability measures $\xi$ on $(\Omega, {\mathcal F})$ 
and all bounded measurable functions 
$f:\Omega \to {\mathbb R}$, 
\begin{equation*}
{\mathbb P}_\xi\left(\lim_{n\to \infty}\frac{f({\tilde X}_0) + \ldots + f({
\tilde X}_{n-1})}{n} = \int_\Omega f \,d\mu\right) = 1.
\end{equation*}\end{theorem}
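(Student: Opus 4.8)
The plan is to transfer the ergodic theorem for the \emph{Markovian} process $(Y_n)_{n\ge 0}$ (Theorem~\ref{T2}) to the non-Markovian process $(\tilde X_n)_{n\ge 0}$ using their equality in law, and then to identify the resulting limit with $\int_\Omega f\,d\mu$. The point is that $(\tilde X_n)_{n\ge 0}$ is not Markovian, so no ergodic theory applies to it directly; the extended process $(Y_n)_{n\ge 0}$ was built precisely to restore the Markov property, and Lemma~\ref{L3a} lets the limit theorem be pulled back.

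First, given $\xi$ on $(\Omega,{\mathcal F})$, I would run $(Y_n)_{n\ge 0}$ from the initial law with $\pi_1(Y_0)\sim\xi$ and $\pi_2(Y_0)=0$. By Lemma~\ref{L3a}, the process $(\pi_1(Y_n))_{n\ge 0}$ then has the same law as $(\tilde X_n)_{n\ge 0}$ started from $\xi$. Applying Theorem~\ref{T2} to the bounded measurable function $f\circ\pi_1$ on $\Omega_Y$ yields that the running averages of $f(\pi_1(Y_k))$ converge almost surely to $\int_{\Omega_Y} f\circ\pi_1\,d\mu_Y$. Since the full trajectories of $(\pi_1(Y_n))_{n\ge 0}$ and $(\tilde X_n)_{n\ge 0}$ are equal in law, any trajectory functional has the same distribution for both; in particular the event that the running averages converge to a prescribed value has the same probability. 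Hence
\begin{equation*}
{\mathbb P}_\xi\left(\lim_{n\to\infty}\frac{f(\tilde X_0)+\ldots+f(\tilde X_{n-1})}{n}=\int_{\Omega_Y} f\circ\pi_1\,d\mu_Y\right)=1,
\end{equation*}
and the limiting value equals $\int_\Omega f\,d\big((\pi_1)_\#\mu_Y\big)$, the integral of $f$ against the $\pi_1$-marginal of $\mu_Y$.

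It remains to show $(\pi_1)_\#\mu_Y=\mu$, and this is the genuinely non-routine step. For it I would invoke the synchronization Lemma~\ref{L3}: running $(Y_n)_{n\ge 0}$ from the law above and $(X_n)_{n\ge 0}$ from $\xi$, one has $\pi_1(Y_n)\sim X_n$ for every $n$, so the two one-time marginals coincide for all $n$. Letting $n\to\infty$, the marginal of $\pi_1(Y_n)$ converges in total variation to $(\pi_1)_\#\mu_Y$ by Theorem~\ref{T2} (pushforward under $\pi_1$ does not increase total variation distance), while the marginal of $X_n$ converges to $\mu$ by Assumption~\ref{A2}. Two sequences that agree term by term must have the same limit, forcing $(\pi_1)_\#\mu_Y=\mu$ and completing the identification.

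Thus the obstacle is not the transfer of the limit theorem — that is purely formal once Lemma~\ref{L3a} is available — but rather certifying that the ParRep dynamics, which periodically resamples from the QSD instead of following the true chain inside $S$, nonetheless carries the correct equilibrium $\mu$ as its $\Omega$-marginal. This substance lives entirely in Lemma~\ref{L3}, which ties the extended process to the original chain at the level of one-time marginals and lets the uniform ergodicity of Assumption~\ref{A2} pin down the limit.
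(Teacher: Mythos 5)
Your proposal is correct and follows essentially the same route as the paper's proof: both transfer Theorem~\ref{T2} to $({\tilde X}_n)_{n\ge 0}$ via Lemma~\ref{L3a} applied to $f\circ\pi_1$, and both pin down the limiting value by combining Lemma~\ref{L3}, the total variation convergence in Theorem~\ref{T2}, and Assumption~\ref{A2} to force the $\Omega$-marginal of $\mu_Y$ to equal $\mu$. The paper's measure $\mu'(A)=\sum_{t=0}^{T_{corr}}\mu_Y(A,t)$ is exactly your pushforward $(\pi_1)_{\#}\mu_Y$, so the two arguments coincide step for step.
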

\begin{proof}
Fix a probability measure $\xi$ on $(\Omega,{\mathcal F})$ 
and a bounded measurable function $f:\Omega \to {\mathbb R}$. 
Define $f_Y:\Omega_Y\to {\mathbb R}$ by 
\begin{equation*}
f_Y = f \circ \pi_1,
\end{equation*}
and define a probability measure 
$\xi_Y$ on $(\Omega_Y,{\mathcal F}_Y)$ by, 
for $A \in {\mathcal F}$ and $t \in \{0,\ldots,T_{corr}\}$, 
\begin{equation}\label{xiy}
\xi_Y(A,t) = \begin{cases} \xi(A),& t = 0\\ 0, & t \in \{1,\ldots,T_{corr}\}\end{cases}
\end{equation}
By Theorem~\ref{T2}, there exists 
a (unique) measure $\mu_Y$ on $(\Omega_Y,{\mathcal F}_Y)$ 
such that 
\begin{equation}\label{erg1Y}
{\mathbb P}_{\xi_Y}\left(\lim_{n\to \infty}\frac{f_Y({Y}_0) + \ldots + f_Y({Y}_{n-1})}{n} = \int_{\Omega_Y} f_Y \,d\mu_Y\right) = 1
\end{equation}
and 
\begin{equation}\label{erg2Y}
\lim_{n\to \infty} ||{\mathbb P}_{\xi_Y}(Y_n \in \cdot) - \mu_Y|| = 0.
\end{equation}
Define a measure $\mu'$ on $(\Omega, {\mathcal F})$ by, 
for $A \in {\mathcal F}$,  
\begin{equation*}
\mu'(A) = \sum_{t=0}^{T_{corr}} \mu_Y(A,t).
\end{equation*}
From this and the definition of $f_Y$, 
\begin{equation*}
\int_{\Omega_Y} f_Y \,d\mu_Y = \int_\Omega f\,d\mu'.
\end{equation*}
So by Lemma~\ref{L3a} and~\eqref{erg1Y}, 
\begin{align}\begin{split}\label{ergtildeX}
1 &= {\mathbb P}_{\xi_Y}\left(\lim_{n\to \infty}\frac{f_Y({Y}_0) + \ldots + f_Y({Y}_{n-1})}{n} = \int_{\Omega_Y} f_Y \,d\mu_Y\right)\\ 
&= {\mathbb P}_\xi\left(\lim_{n\to \infty}\frac{f({\tilde X}_0) + \ldots + f({
\tilde X}_{n-1})}{n} = \int_\Omega f\,d\mu'\right).\end{split}
\end{align}
Also, by Lemma~\ref{L3} and~\eqref{erg2Y}, 
\begin{align*}
0 &= \lim_{n\to \infty}\, \sup_{A \in {\mathcal F}}|{\mathbb P}_{\xi_Y}(Y_n \in A \times \{0,\ldots, T_{corr}\}) - \mu_Y(A \times\{0,\ldots,T_{corr}\})| \\
&= \lim_{n\to \infty}\, \sup_{A \in {\mathcal F}}|{\mathbb P}_{\xi}({X}_n \in A) - \mu'(A)|\\
&= \lim_{n\to \infty} ||{\mathbb P}_{\xi}({X}_n \in \cdot) - \mu'||.
\end{align*}
Using Assumption~\ref{A2} one can conclude $\mu = \mu'$. 
So from~\eqref{ergtildeX}, 
\begin{equation*}
{\mathbb P}_\xi\left(\lim_{n\to \infty}\frac{f({\tilde X}_0) + \ldots + f({
\tilde X}_{n-1})}{n} = \int_\Omega f\,d\mu\right) = 1.
\end{equation*}
\end{proof}

\subsubsection{Proof of main result}\label{sec:MP}

Here the main result, 
Theorem~\ref{MainTheorem}, is finally proved. The idea is 
to use ergodicity of 
$({\tilde X}_n)_{n\ge 0}$ along with the 
fact that the {\em average} value of the 
contribution to $f_{sim}$ from
a Parallel Step of Algorithm~\ref{alg1} 
does not depend on the number of replicas. 
A law of large numbers applied to the contributions 
to $f_{sim}$ from all the Parallel Steps will
then be enough to conclude. Note 
that the law of $(f_{sim})_{T_{sim} \ge 0}$ 
depends on the number $N$ of replicas, 
but this is not indicated explicitly.

\begin{proof}[Proof of Theorem~\ref{MainTheorem}.]
Fix a probability measure $\xi$ on $(\Omega,{\mathcal F})$ and 
a bounded measurable function $f:{\Omega} \to {\mathbb R}$. 
Define ${\hat f}:\Omega_Y \to {\mathbb R}$ by 
\begin{equation*}
{\hat f}(x,t) = \begin{cases} f(x),& t \in \{0,\ldots,T_{corr}-1\}\\
0,& t = T_{corr}\end{cases}
\end{equation*}
Let Algorithm~\ref{alg1} start at $\xi$.
The quantity $f_{sim}$ will be decomposed 
into contributions from the 
Decorrelation Step and the Parallel Step. 
Let $f_{sim}^{corr}$ denote the contribution to $f_{sim}$ 
from the Decorrelation Step up to time $T_{sim}$, and 
let $f_{sim}^{par}$ denote the contribution to $f_{sim}$ 
from the Parallel Step up to time $T_{sim}$. Thus, 
\begin{equation}\label{pt}
 (f_{sim})_{T_{sim} \ge 0} = \left(f_{sim}^{corr}+f_{sim}^{par}\right)_{T_{sim} \ge 0}.
\end{equation}
Let $(Y_n)_{n\ge 0}$ start at $Y_0 \sim \xi_Y$, with 
$\xi_Y$ defined as in~\eqref{xiy}. 
Because the starting points $x_1,\ldots,x_N$ sampled in 
the Dephasing Step are independent of the history of algorithm, 
each Parallel Step -- in particular the pair
$(\tau_{acc},X_{\tau^K}^K)$ -- is 
independent of the history of the algorithm. 
This and Theorem~\ref{T0} imply 
that $(f_{sim}^{corr})_{T_{corr}\ge 0}$ has 
the same law for every number of replicas 
$N$. In particular when $N=1$, 
from Lemma~\ref{L3a},
\begin{equation}\label{pt1}
 (f_{sim}^{corr})_{T_{sim}\ge 0} 
 \sim \left(\sum_{i=0}^{T_{corr}}{\hat f}(Y_i)\right)_{T_{sim} \ge 0}.
\end{equation}
Meanwhile, from the preceding independence argument, 
\begin{equation}\label{pt2}
 (f_{sim}^{par})_{T_{sim} \ge 0} 
\sim \left(\sum_{S \in {\mathcal S}}\sum_{i=1}^{n_{S,T_{sim}}} \theta_{S,N}^{(i)} \right)_{T_{sim} \ge 0},
\end{equation}
where $\{\theta_{S,N}^{(i)}\}_{i=1,2,\ldots}$ are iid random variables and 
$n_{S,T_{sim}}$ counts the number of sojourns of 
$(Y_n)_{n\ge 0}$ in $S \times\{T_{corr}\}$ 
by time $T_{sim}$:
\begin{equation*}
n_{S,T_{sim}} = \#\{1\le n < T_{sim}\,:\, Y_n \in S\times \{T_{corr}\},\,Y_{n-1} \in S \times \{T_{corr}-1\}\}. 
\end{equation*}
From Idealization~\ref{A0} and 
Definition~\ref{D1}, each term in the sum in~\eqref{sum1} or~\eqref{sum2} 
of the Parallel Step has expected value $\int_S f\,d\nu$. So from  
linearity of expectation and Theorems~\ref{T0}, for any 
number $N$ of replicas, 
\begin{align}\begin{split}\label{theta}
{\mathbb E}[\theta_{S,N}^{(i)}] &= \left({\mathbb E}[\tau_{acc}]-1\right)\int_S f\,d\nu
\\&=\left({\mathbb P}_\nu(X_1 \notin S)^{-1}-1\right)\int_S f\,d\nu.\end{split}
\end{align}
Combining~\eqref{pt},~\eqref{pt1} and~\eqref{pt2}, for 
any number $N$ of replicas, 
\begin{align}\begin{split}\label{decomp}
\left(\frac{f_{sim}}{T_{sim}}\right)_{T_{sim}\ge 0} &\sim 
\left(\frac{1}{T_{sim}}\sum_{i=0}^{T_{sim}} {\hat f}(Y_i) 
+ \frac{1}{T_{sim}}\sum_{S \in {\mathcal S}}\sum_{j=0}^{n_{S,T_{sim}}}
 \theta_{S,N}^{(i)}\right)_{T_{sim} \ge 0},\end{split}
\end{align}
where it is assumed 
the processes on the left and right hand side 
of~\eqref{decomp} are independent.
Let $({\tilde X}_n)_{n\ge 0}$ start at 
${\tilde X}_0 \sim \xi$. From definition 
of $({\tilde X}_n)_{n\ge 0}$ and~\eqref{decomp}, 
when the number of replicas is $N=1$, 
\begin{align}\begin{split}\label{conv1}
\left(\frac{f_{sim}}{T_{sim}}\right)_{T_{sim}\ge 0} &\sim 
\left(\frac{f({\tilde X}_0) + \ldots + f({\tilde X}_{T_{sim}})}{T_{sim}} \right)_{T_{sim} \ge 0}\\
&\sim \left(\frac{1}{T_{sim}}\sum_{i=0}^{T_{sim}} {\hat f}(Y_i) 
+ \frac{1}{T_{sim}}\sum_{S \in {\mathcal S}}\sum_{j=0}^{n_{S,T_{sim}}} \theta_{S,1}^{(i)}\right)_{T_{sim} \ge 0},\end{split}
\end{align}
where the processes in~\eqref{conv1} are assumed independent.
Since $(Y_n)_{n\ge 0}$ is Markov, the number of 
time steps $n$ for which $Y_n \in S \times \{T_{corr}\}$ 
is either finite almost 
surely, or infinite almost surely. By Theorem~\ref{T0} 
and Assumption~\ref{A4}, the expected value 
of each of the sojourn times of $(Y_n)_{n\ge 0}$ in 
$S \times \{T_{corr}\}$ is $1/{\mathbb P}_\nu(X_1 \notin S)\le 1/\delta < \infty$, so the 
sojourn times are finite almost surely. This means that 
either $(Y_n)_{n\ge 0}$ has infinitely many sojourns in 
$S \times \{T_{corr}\}$ almost surely, 
or $(Y_n)_{n\ge 0}$ has finitely many sojourns in $S \times \{T_{corr}\}$ 
almost surely. Thus:
\begin{equation}\label{fininf}
\forall\,S \in {\mathcal S},  \hbox{ either }
{\mathbb P}_{\xi_Y}\left(\lim_{T_{sim} \to \infty} n_{S,T_{sim}} = \infty\right) = 1
 \hbox{ or }
{\mathbb P}_{\xi_Y}\left(\lim_{T_{sim} \to \infty} \frac{n_{S,T_{sim}}}{T_{sim}} = 0\right) =1.
\end{equation}
Define $\tau_S^{(0)} = 0$ and for $i = 1,2,\ldots$, 
\begin{align*}
&\tau_S^{(i)} = \inf\{n > \tau_S^{(i-1)}\,:\,Y_n \in S\times \{T_{corr}\},\,Y_{n-1} \in S \times \{T_{corr}-1\}\}\\
&\sigma_S^{(i)} = \tau_S^{(i)}-\tau_S^{(i-1)}.
\end{align*}
Note that $\{\sigma_S^{(i)}\}_{i=1,2,\ldots}$ are iid and 
\begin{equation*}
\frac{1}{n_{S,T_{sim}}}\sum_{i=1}^{n_{S,T_{sim}}} \sigma_S^{(i)} \le \frac{T_{sim}}{n_{S,T_{sim}}} \le \frac{1}{n_{S,T_{sim}}}\sum_{i=1}^{n_{S,T_{sim}}+1}\sigma_S^{(i)}.
\end{equation*}
If $n_{S,T_{sim}} \to \infty$ almost surely as $T_{sim} \to \infty$, then by the 
strong law of large numbers there is a constant $c'$ (depending 
on $S$) such that
\begin{equation}\label{SLLN}
{\mathbb P}_\xi\left(\lim_{T_{sim}\to \infty}\frac{T_{sim}}{n_{S,T_{sim}}} = c'\right) = 1. 
\end{equation}
From~\eqref{fininf},~\eqref{SLLN} and the strong law 
of large numbers, there is a constant $c$
such that  
\begin{equation}\label{conv2}
{\mathbb P}_{\xi_Y}\left(\lim_{T_{sim} \to \infty}\frac{1}{T_{sim}}\sum_{S \in {\mathcal S}}\sum_{j=0}^{n_{S,T_{sim}}} \theta_{S,N}^{(i)} = c\right) = 1,
\end{equation}
and due to~\eqref{theta} this $c$ {does not depend on the 
number of replicas $N$}. By using Theorem~\ref{T3} 
along with~\eqref{conv1} and \eqref{conv2}, 
\begin{equation}\label{conv3}
{\mathbb P}_{\xi_Y}\left(\lim_{T_{sim} \to \infty}\frac{1}{T_{sim}}\sum_{i=0}^{T_{sim}} {\hat f}(Y_i) = \int_\Omega f\,d\mu - c\right) = 1.
\end{equation}
Now using~\eqref{decomp},~\eqref{conv2} and~\eqref{conv3}, 
for any number $N$ of replicas, 
\begin{equation*}
{\mathbb P}_{\xi_Y}\left(\lim_{T_{sim} \to \infty}\frac{f_{sim}}{T_{sim}} = \int_\Omega f\,d\mu\right) = 1.
\end{equation*}
\end{proof}

\section*{Acknowledgement} 

The author would like to acknowledge Gideon Simpson 
(Drexel University), Tony Leli\`evre (Ecole des Ponts 
ParisTech) and Lawrence Gray (University of Minnesota) 
for fruitful discussions.


\begin{thebibliography}{99}

\bibitem{aristoff} {\sc D. Aristoff, T. Leli\`evre and G. Simpson}, 
{The parallel replica method for simulating long trajectories of 
Markov chains}, {\em Applied Mathematics Research eXpress} {\bf 2014}(2): 332--352 (2014)

\bibitem{binder} {\sc A. Binder, T. Leli\`evre and G. Simpson}, 
{A generalized parallel replica dynamics}, {\em J. Comput. Phys.} {\bf 284}: 595--616 (2015)

\bibitem{bovier}
{\sc A.~Bovier, M.~Eckhoff, V.~Gayrard, and M.~Klein},
Metastability and low lying spectra in reversible {M}arkov chains, 
{\em Commun. Math. Phys.} {\bf 228}(2): 219--255  (2002)

\bibitem{chodera}
{\sc J.D. Chodera, N.~Singhal, V.S. Pande, K.A. Dill, and W.C. Swope},
Automatic discovery of metastable states for the construction of
  Markov models of macromolecular conformational dynamics, 
{\em J. Chem. Phys.} {\bf 126}(15): 155101 (2007)
 
\bibitem{Douc} {\sc R. Douc, E. Moulines, and D. Stoffer}. 
{\em Nonlinear time series: theory, methods and applications with examples}. 
{CRC Press} (2014) 

\bibitem{Durrett} {\sc R. Durrett}. {\em Probability: Theory and examples}. 
{Cambridge U. Press}, 4th~edn (2010)


\bibitem{lebris} {\sc C. Le Bris, T. Leli\`evre, M. Luskin, and
   D. Perez}, {\em A mathematical formulation  of the parallel replica dynamics}, 
{Monte Carlo Methods Appl.} {\bf 18}:119--146 (2012)

\bibitem{tonybook}
{\sc T.~Leli\`evre, G.~Stoltz, and M.~Rousset}. 
{\em Free Energy Computations: A Mathematical Perspective}.
Imperial College Press (2010)

\bibitem{prinz}
{\sc J.H. Prinz, H.~Wu, M.~Sarich, B.~Keller, M.~Senne, M.~Held, J.D. Chodera,
  C.~Sch{\"u}tte, and F.~No{\'e}},
Markov models of molecular kinetics: Generation and validation. 
{\em J. Chem. Phys.} {\bf 134} (17): 174105  (2011)

\bibitem{rubinstein}
{\sc R.Y. Rubinstein and D.P Kroese.} 
{\em Simulation and the Monte Carlo Method}, Wiley, 2nd~edn (2007)

\bibitem{schutte} {\sc C. Sch\"utte and M. Sarich}. 
{\em Metastability and Markov State Models in Molecular Dynamics: 
Modeling, Analysis, Algorithmic Approaches}. 
Courant Lecture Notes (2010) 



\bibitem{scoppola}
{\sc E.~Scoppola.}
Metastability for {M}arkov chains: A general procedure based on
  renormalization group ideas.
In G.~Grimmett, editor, {\em Probability and Phase Transition},
  volume 420 of {\em NATO ASI Series}, pp.~303--322, Springer Verlag
  (1994)

\bibitem{gideon} {\sc G. Simpson and M. Luskin}, {Numerical analysis 
of parallel replica dynamics}, {\em ESAIM: M2AN} 
{\bf 47}(5): 1287--1314 (2013)

\bibitem{voter} {\sc A.F. Voter}, {Parallel replica method for 
dynamics of infrequent events}, {\em Phys. Rev. B} {\bf 57}(22): 
13985--13988 (1998)





\end{thebibliography}
\end{document}